\documentclass[11pt,reqno]{amsart}
\usepackage[T1]{fontenc}
\usepackage{lmodern}
\usepackage{amsmath,amssymb,amsthm,mathtools}
\usepackage{mathrsfs}
\usepackage[margin=1.15in]{geometry}
\usepackage{microtype}
\usepackage{enumitem}
\usepackage{xcolor}
\usepackage[colorlinks=true,linkcolor=blue!45!black,citecolor=blue!45!black,urlcolor=blue!45!black]{hyperref}
\hypersetup{pdftitle={Generic well-posedness for a family of quadratic BSDE systems},pdfauthor={Siyi Wang},pdfsubject={Generic well-posedness for quadratic BSDEs},pdfkeywords={quadratic BSDE, Baire category, BMO, terminal data}}
\newcommand{\R}{\mathbb R}
\newcommand{\N}{\mathbb N}
\newcommand{\E}{\mathbb E}
\newcommand{\PP}{\mathbb P}
\newcommand{\FF}{\mathcal F}
\newcommand{\Sp}{\mathcal S}
\newcommand{\Hp}{\mathcal H}
\newcommand{\BMO}{\mathrm{BMO}}
\newcommand{\cE}{\mathcal E}
\newcommand{\ind}{\mathbf 1}
\newcommand{\dd}{\,\mathrm d}
\newcommand{\dt}{\,\mathrm dt}
\newcommand{\ds}{\,\mathrm ds}
\newcommand{\Sol}{\operatorname{Sol}}
\newcommand{\diam}{\operatorname{diam}}
\newcommand{\conv}{\operatorname{conv}}
\newcommand{\dist}{\operatorname{dist}}
\DeclarePairedDelimiter{\norm}{\lVert}{\rVert}
\numberwithin{equation}{section}
\newtheorem{theorem}{Theorem}[section]
\newtheorem{proposition}[theorem]{Proposition}
\newtheorem{lemma}[theorem]{Lemma}
\newtheorem{corollary}[theorem]{Corollary}
\theoremstyle{definition}
\newtheorem{definition}[theorem]{Definition}
\theoremstyle{remark}

\setlist[enumerate,1]{label=\textup{(\roman*)},leftmargin=2.2em}
\allowdisplaybreaks[1]
\title[Generic well-posedness for quadratic BSDE systems]{Generic well-posedness for a family of quadratic BSDE systems}
\author{Siyi Wang}
\thanks{Email: \href{mailto:swangbz@connect.ust.hk}{\texttt{swangbz@connect.ust.hk}}.}
\date{September 24, 2026}
\subjclass[2020]{60H10, 60G44, 54E52}
\keywords{Quadratic BSDE systems, Baire category, generic well-posedness, bounded terminal data, BMO martingales}
\begin{document}
\begin{abstract}
We study a two-parameter family of quadratic BSDE systems of the form given by Jackson~\cite{Jackson2023} in his open questions on non-Markovian solvability. Fan, Hu, and Tang~\cite{FHT2025} established well-posedness for arbitrary bounded terminal data when $1/\alpha+1/\beta=1$. For every $\alpha,\beta>0$ and $M>0$, we prove that the space of terminal data with components bounded by $M$, equipped with convergence in probability, contains a dense $G_\delta$ set on which the system has a unique solution in $\mathcal S^\infty\times\mathrm{BMO}$. This gives generic well-posedness in the sense of Baire category for all positive parameters, including Jackson's stochastic-game example $\alpha=\beta=1$. The solution map is continuous on this set in $\mathcal S^p\times\mathcal H^p$ for every $1\le p<\infty$. The proof combines uniform BMO estimates, stability on a dense class of solvable terminal data, and Baire's theorem. We also establish well-posedness for arbitrary two-valued terminal data and show that solvability for all bounded terminal data is equivalent to solvability for all three-valued terminal data.
\end{abstract}
\maketitle
\section{Introduction and main results}\label{sec:intro}

Jackson~\cite[Section 6.4, Questions 6.25 and 6.26]{Jackson2023} raises open questions on non-Markovian BSDEs with interacting quadratic drivers. The motivating example is a two-player path-dependent stochastic game with generator $q_i(z)=\tfrac12z_i^2+z_1z_2$, $i=1,2$. We establish generic well-posedness in the terminal data for a two-parameter family containing this example.

Let $W$ be a one-dimensional Brownian motion on its augmented natural filtration $(\FF_t)_{0\le t\le T}$, where $T>0$. Consider
\begin{equation}\label{eq:bsde}
 Y_t=\xi+\int_t^Tq_\theta(Z_s)\ds-\int_t^TZ_s\dd W_s,
 \qquad \theta=(\alpha,\beta)\in(0,\infty)^2,
\end{equation}
with $Y,Z$ taking values in $\R^2$ and
\begin{equation}\label{eq:generator}
 q_{\theta,i}(z)=z_i(z_1+z_2)-\frac{\alpha_i}{2}z_i^2,
 \qquad (\alpha_1,\alpha_2)=(\alpha,\beta).
\end{equation}
Jackson's example \cite[equation (6.11)]{Jackson2023} is the case $\alpha=\beta=1$. For the family \eqref{eq:generator}, Fan, Hu, and Tang~\cite[Corollary 2.25]{FHT2025} prove global well-posedness in $\Sp^\infty\times\BMO$ for every bounded terminal datum when
\[
 R:=\frac1\alpha+\frac1\beta=1.
\]
On this curve, an invertible linear change of coordinates makes the system triangular. Our result establishes generic well-posedness in the sense of Baire category for all $\alpha,\beta>0$. In particular, it applies to Jackson's example, where $R=2$.

For $M>0$, define
\begin{equation}\label{eq:terminalspace}
 X_M=\{\xi\in L^2(\FF_T;\R^2): |\xi_i|\le M\text{ a.s.},\ i=1,2\},
 \qquad d(\xi,\eta)=\norm{\xi-\eta}_{L^2}.
\end{equation}
This is a complete separable metric space. Its topology agrees with convergence in probability and with convergence in $L^p$ for every $1\le p<\infty$. We construct a dense $G_\delta$ set of terminal data at which \eqref{eq:bsde} is well-posed. The bound $M$ and the positive parameters are arbitrary.

\subsection{Solution spaces and the main theorem}
For $1\le p<\infty$, set
\[
 \norm{Y}_{\Sp^p}=\bigg(\E\sup_{0\le t\le T}|Y_t|^p\bigg)^{1/p},
 \qquad
 \norm{Z}_{\Hp^p}=\bigg(\E\Big(\int_0^T|Z_t|^2\dt\Big)^{p/2}\bigg)^{1/p}.
\]
Here $Y$ is continuous and adapted, and $Z$ is predictable. We also write $\norm{Y}_{\Sp^\infty}=\|\sup_{t\le T}|Y_t|\|_\infty$ and
\[
 \norm{Z}_{\BMO}^2
 =\sup_{\tau\le T}\left\|\E\left[\int_\tau^T|Z_t|^2\dt\,\middle|\,\FF_\tau\right]\right\|_\infty,
\]
where the supremum is over stopping times. A bounded solution is a pair satisfying \eqref{eq:bsde} with $Y\in\Sp^\infty$ and $\int_0^T|Z_t|^2\dt<\infty$ almost surely. Proposition~\ref{prop:bounds} shows that every such solution has $Z\in\BMO$. Equality of solutions means indistinguishability of $Y$ and $\dt\otimes\dd\PP$-almost everywhere equality of $Z$. Write $\Sol_\theta(\xi)$ for the set of bounded solutions.

\begin{theorem}[Generic well-posedness]\label{thm:main}
For every $\theta\in(0,\infty)^2$ and every $M>0$, there exists a dense $G_\delta$ set $\mathcal G_M^\theta\subset X_M$ such that, for every $\xi\in\mathcal G_M^\theta$, the following properties hold.
\begin{enumerate}
\item Equation \eqref{eq:bsde} has a unique solution $(Y^\xi,Z^\xi)\in\Sp^\infty\times\BMO$. It satisfies
\begin{equation}\label{eq:mainbounds}
 |Y_t^{\xi,i}|\le M,
 \qquad \norm{Z^\xi}_{\BMO}^2
 \le\frac{e^{8M}-1}{\min(\alpha,\beta)}.
\end{equation}
\item If $\xi_n\in X_M$, $\xi_n\to\xi$ in probability, and $(Y^n,Z^n)\in\Sol_\theta(\xi_n)$, then
\begin{equation}\label{eq:mainstability}
 \norm{Y^n-Y^\xi}_{\Sp^p}+\norm{Z^n-Z^\xi}_{\Hp^p}\longrightarrow0
 \qquad(1\le p<\infty).
\end{equation}
\item The set $\mathcal G_M^\theta$ contains every terminal datum in $X_M$ attained by a piecewise segment solution, as defined in Section~\ref{sec:core}. In particular, it contains every datum in $X_M$ taking at most two values.
\end{enumerate}
\end{theorem}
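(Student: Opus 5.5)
The plan is a Baire-category argument built on an oscillation functional for the set-valued solution map. For $\xi\in X_M$ and $\delta>0$ set
\[
\omega_p(\xi,\delta)=\sup\Big\{\norm{Y-Y'}_{\Sp^p}+\norm{Z-Z'}_{\Hp^p}:\ (Y,Z)\in\Sol_\theta(\eta),\ (Y',Z')\in\Sol_\theta(\eta'),\ \eta,\eta'\in X_M,\ d(\eta,\xi)<\delta,\ d(\eta',\xi)<\delta\Big\},
\]
with the convention $\sup\emptyset=0$, and let $\omega_p(\xi)=\inf_{\delta>0}\omega_p(\xi,\delta)$. I would take
\[
\mathcal G_M^\theta=\bigcap_{k,m\ge1}\{\xi\in X_M:\ \omega_{m}(\xi)<1/k\}\cap\{\xi:\Sol_\theta(\xi)\neq\emptyset\}.
\]
The argument then has three parts.

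\emph{Openness.} Each set $U_{k,m}=\{\omega_m<1/k\}$ is open. If $\omega_m(\xi,\delta)<1/k$ and $d(\xi',\xi)<\delta/2$, then the ball of radius $\delta/2$ about $\xi'$ lies inside the $\delta$-ball about $\xi$, so $\omega_m(\xi',\delta/2)<1/k$. It then remains to show that the existence part is already implied by the intersection of the $U_{k,m}$ together with density of solvable data. Let $\omega(\xi)=0$ and pick solvable $\eta_n\to\xi$ with solutions $(Y^n,Z^n)$, using the dense class described below. The definition of $\omega$ makes this sequence Cauchy in $\Sp^p\times\Hp^p$. By Proposition~\ref{prop:bounds} the uniform bounds $|Y^n|\le M$ and $\norm{Z^n}_{\BMO}^2\le(e^{8M}-1)/\min(\alpha,\beta)$ hold. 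Since the generator is quadratic, these BMO bounds give uniform integrability of $\int|q_\theta(Z^n)|\ds$ (via energy inequalities for BMO martingales), so one can pass to the limit in the equation. The limit is therefore a bounded solution for $\xi$, and it satisfies \eqref{eq:mainbounds}: the $Y$ bound passes to the limit a.s., and the BMO bound passes by Fatou conditionally on $\FF_\tau$. Uniqueness at $\xi$ follows from $\omega_p(\xi)=0$ applied with $\eta=\eta'=\xi$: any two solutions have distance at most $\omega_p(\xi,\delta)$ for every $\delta$, hence distance $0$. Item (ii) is the same statement with $\eta=\xi_n$ and $\eta'=\xi$. To cover all $p$ it is enough to treat integer $m$, since $L^p$ norms increase in $p$. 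So $\mathcal G_M^\theta=\bigcap_{k,m}U_{k,m}$ is a $G_\delta$ set.

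\emph{Density.} This is the core step. I would use the class $\mathcal D$ of terminal data attained by piecewise segment solutions (Section~\ref{sec:core}), which includes finitely-valued simple data of the relevant structure, and show two things. First, $\mathcal D$ is dense in $X_M$. Second, every $\xi\in\mathcal D$ satisfies $\omega_p(\xi)=0$. The latter is a stability estimate: for $\eta$ close to $\xi\in\mathcal D$ and any bounded solution $(Y',Z')$ for $\eta$, compare it with the explicit solution $(Y,Z)$ for $\xi$. Write $\Delta Y=Y'-Y$. Its dynamics are $\dd\Delta Y=-(A_s\,\Delta Z_s)\ds+\Delta Z\dd W$, where $A_s$ is linear in $(Z_s,Z'_s)$; in the coordinates used there this is $(\partial q_\theta \text{ evaluated along }Z)$ plus a quadratic term in $\Delta Z$. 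The known structure of the segment solution should be the key input: on each piece $Z$ is aligned with a fixed direction, so one can pass to coordinates in which the linearised system becomes triangular, roughly as in the $R=1$ reduction of \cite{FHT2025}. Girsanov with a BMO drift then lets one control the quadratic cross term via $|\Delta Y|\le 2M$ and reverse Hölder. The result I would aim for is $\norm{\Delta Y}_{\Sp^p}+\norm{\Delta Z}_{\Hp^p}\le C\,\norm{\xi-\eta}_{L^{q}}^{\gamma}$, with constants depending only on $M,\theta$ and the BMO bound, which is uniform by Proposition~\ref{prop:bounds}.

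\emph{Main obstacle and conclusion.} The main obstacle is this quantitative stability around segment solutions for \emph{arbitrary} nearby solutions, where uniqueness is not known a priori. In fact the perturbed solution $Z'$ need not be aligned with anything, so the quadratic remainder $|\Delta Z|^2$ has to be absorbed by a John–Nirenberg/exponential-transform argument localized on the pieces. I would try an exponential change of variable $e^{c\,\ell\cdot\Delta Y}$ with $\ell$ chosen so that the remainder becomes nonnegative, using $\min(\alpha,\beta)>0$. Once this is in place, Baire's theorem in the complete metric space $X_M$ completes the proof, since each $U_{k,m}$ is open and contains the dense set $\mathcal D$, hence is dense. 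Item (iii) then holds by construction: $\mathcal D\subset\mathcal G_M^\theta$, and two-valued data are handled by an explicit segment solution built from the solution of a scalar BSDE along the line joining the two values.
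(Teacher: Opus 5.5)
Your Baire-category scaffolding is correct and matches the paper's Lemma~\ref{lem:abstract}. Your sets $\{\omega_m<1/k\}$ play the role of the open sets $O_j$. Existence where the oscillation vanishes is the Cauchy-sequence-plus-closed-graph argument of Lemma~\ref{lem:closed}, and uniqueness and (ii) follow from the vanishing diameter as you say. However, the two inputs the argument rests on are asserted rather than proved, and they are the substance of the theorem.

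\emph{Density of $\mathcal D_M^\theta$.} You give no argument for this. The remark that $\mathcal D$ contains ``finitely-valued simple data of the relevant structure'' cannot supply one. Finitely-valued data are not known to be solvable beyond special cases such as two-valued data. By Corollary~\ref{cor:equivalence}, solvability of all three-valued data is equivalent to solvability of all bounded data, which is the open problem. The paper needs a genuine existence input here, in three parts:
\begin{enumerate}
\item the Markovian theory of Xing and \v Zitkovi\'c for H\"older terminal functions, made uniform in a frozen parameter via \eqref{eq:markovholder} and \eqref{eq:paramholder}, so that H\"older cylinder data can be solved by backward concatenation;
\item density of such cylinder data in $X_M$;
\item the forward-SDE approximation of Lemma~\ref{lem:forward}. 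It replaces $V$ by simple predictable integrands and stops on exit from a compact set, so that on each grid interval $U$ moves along an $\FF_{t_j}$-measurable line. This yields piecewise segment solutions.
\end{enumerate}

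\emph{Stability at $\mathcal D_M^\theta$.} Your linearization route does not close, for several reasons.
\begin{enumerate}
\item Since $\xi_n\to\bar\xi$ only in probability, $\Delta\xi$ is not small in $L^\infty$, so $\Delta Z$ cannot be expected to be small in BMO.
\item The remainder is $q_\theta(Z+\Delta Z)-q_\theta(Z)-Dq_\theta(Z)\Delta Z=q_\theta(\Delta Z)$, the full two-dimensional interaction. It therefore cannot be absorbed perturbatively.
\item A scalar exponential $e^{c\ell\cdot\Delta Y}$ yields only a priori bounds of the type in Proposition~\ref{prop:bounds}. The cross terms between $Z$ and $\Delta Z$ are bounded, not small.
\item The linear triangularization of \cite{FHT2025} is tied to the curve $R=1$.
\item A H\"older rate in $\norm{\xi-\eta}_{L^q}$ is more than you should aim for; the paper proves only qualitative convergence.
\end{enumerate}

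The missing idea is non-perturbative. Change measure using the perturbed solution's own kernel $b_n=Z^n_1+Z^n_2$, so that $U^n$ is a $Q_n$-martingale, and set $\widehat U^n=\E^{Q_n}[\bar U_\tau\mid\FF_t]$. As a conditional expectation, $\widehat U^n$ stays on the reference's $\FF_\sigma$-measurable segment. It is close to $U^n$ by the positive and negative density moments of Lemma~\ref{lem:density}. On that segment, the scalar map $\gamma_{A,e}$ turns its dynamics into $\dd P_n=-\varepsilon_nB_n\dt+B_n\dd W$ with $\varepsilon_n\to0$ in $\Hp^p$. You then compare this with the martingale $\bar P$, which has the same terminal value. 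Backward induction over the pieces gives the qualitative convergence of Theorem~\ref{thm:corestability}, which is all the Baire argument needs.
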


The solutions in \eqref{eq:mainstability} may be chosen arbitrarily. In particular, the solution map on $\mathcal G_M^\theta$ is continuous with values in $\Sp^p\times\Hp^p$ for every $1\le p<\infty$.

\subsection{Finite-valued data and parameter dependence}
The segment construction also gives a reduction to three-valued terminal data. For each $M$, we choose three deterministic vectors $c_0,c_1,c_2$, depending only on $M$ and $\theta$. After extending the Brownian motion to $[0,3T]$, we construct a continuous map $\Theta$ satisfying
\begin{equation}\label{eq:introbijection}
 \Sol_{\theta,[0,T]}(\xi)
 \simeq\Sol_{\theta,[0,3T]}(\Theta(\xi)),
 \qquad \Theta(\xi)\in\{c_0,c_1,c_2\}\quad\text{a.s.}
\end{equation}
The bijection is given by adjoining a uniquely determined segment solution on each added interval. Thus solvability for all bounded terminal data is equivalent to solvability for all terminal data taking at most three values. For two-valued data, the segment formula gives existence and uniqueness directly. Section~\ref{sec:finite} proves \eqref{eq:introbijection} and generic well-posedness in the space of terminal data taking three fixed values. Section~\ref{sec:parameters} establishes joint stability and genericity with respect to $(\theta,\xi)$.

\subsection{Method and related work}
Tevzadze~\cite{Tevzadze2008} proves solvability for sufficiently small bounded terminal data by a BMO fixed-point argument. Harter and Richou~\cite{HR2019} derive existence and stability under quantitative BMO bounds on approximating solutions. Jackson and \v Zitkovi\'c~\cite{JZ2022} establish non-Markovian well-posedness for triangular quadratic systems, and Jackson~\cite{Jackson2023} develops the corresponding theory of matrix stochastic exponentials. More recently, Horst, Schmidek, and Zhang~\cite{HSZ2026} obtain existence, uniqueness, and stability for systems with weak interactions under dimension-independent smallness conditions.

Xing and \v Zitkovi\'c~\cite{XZ2018} prove global Markovian solvability under structural conditions on the generator and H\"older continuity of the terminal function. Their theory applies to \eqref{eq:generator} for all positive parameters and supplies the existence and regularity estimates used in Section~\ref{sec:core}.

Bahlali, Mezerdi, and Ouknine~\cite{BMO2004} prove generic existence, uniqueness, and stability in a space of bounded continuous generators, with the terminal datum fixed. In this paper, we fix the generator and apply Baire's theorem in the terminal space \eqref{eq:terminalspace}. Our argument relies on stability at each piecewise segment solution with respect to arbitrary bounded solutions whose terminal values converge to the reference terminal value within a fixed $X_M$.

The proof has three steps. First, we construct bounded solutions for H\"older continuous cylinder terminal data by concatenating Markovian solutions, with past observations treated as parameters. Forward approximation after an exponential transformation then produces piecewise segment solutions whose terminal data are dense in $X_M$.

Second, for each approximating solution we use a Girsanov change of measure under which its exponential transform is a martingale. On each interval of a fixed piecewise segment solution, we take, under this measure, the conditional expectation of the reference solution's exponential transform at the right endpoint. This auxiliary martingale lies on the same segment. A scalar change of variables removes the quadratic drift there. Positive and negative moments of the Radon--Nikodym densities transfer the estimates to the original probability measure, and backward induction gives stability in $\Sp^p\times\Hp^p$.

Third, Baire's theorem yields a residual set of terminal data for which the approximating solutions are Cauchy in $\Sp^2\times\Hp^2$. The closedness of the solution graph gives existence, while the same diameter estimate yields uniqueness and stability. Sections~\ref{sec:estimates}--\ref{sec:baire} carry out these steps.

\section{A priori estimates and an exponential transformation}\label{sec:estimates}

Throughout this section $\theta=(\alpha,\beta)\in(0,\infty)^2$ is fixed. Constants may depend on $\theta$, $M$ and the exponent $p$. On compact subsets of the parameter space the constants can be chosen uniformly.

\subsection{Uniform bounds and BMO estimates}
\begin{proposition}\label{prop:bounds}
Every bounded solution with terminal datum $\xi\in X_M$ satisfies \eqref{eq:mainbounds}. In particular, for each $1\le p<\infty$,
\begin{equation}\label{eq:allmoments}
 \sup_{\xi\in X_M}\ \sup_{(Y,Z)\in\Sol_\theta(\xi)}
 \E\Big(\int_0^T|Z_t|^2\dt\Big)^{p/2}<\infty.
\end{equation}
\end{proposition}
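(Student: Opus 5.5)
The plan is to first get a BMO estimate depending only on $\norm{Y}_{\Sp^\infty}$, then use it to justify a Girsanov argument giving $|Y^i|\le M$, and finally rerun the BMO estimate with $M$ in place of $\norm{Y}_{\Sp^\infty}$. Let $(Y,Z)\in\Sol_\theta(\xi)$ with $\xi\in X_M$, and write $b=Z^1+Z^2$, $S=Y^1+Y^2$, $K=\norm{Y}_{\Sp^\infty}<\infty$.

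\emph{Step 1: an exponential identity for the sum.} Summing the generators gives
\[
 q_{\theta,1}(z)+q_{\theta,2}(z)=(z_1+z_2)^2-\tfrac{\alpha}{2}z_1^2-\tfrac{\beta}{2}z_2^2 .
\]
Hence $\dd S_t=-\big(b_t^2-\tfrac\alpha2(Z^1_t)^2-\tfrac\beta2(Z^2_t)^2\big)\dt+b_t\dd W_t$. Applying It\^o's formula to $e^{2S}$ cancels the $b^2$ terms exactly:
\[
 \dd e^{2S_t}=e^{2S_t}\big(\alpha(Z^1_t)^2+\beta(Z^2_t)^2\big)\dt+2e^{2S_t}b_t\dd W_t .
\]
The drift is nonnegative and at least $\min(\alpha,\beta)e^{2S_t}|Z_t|^2$.

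Fix a stopping time $\tau$ and localize with $\sigma_n=\inf\{t\ge\tau:\int_\tau^t|Z_s|^2\ds\ge n\}\wedge T$. On $[\tau,\sigma_n]$ the stochastic integral is a true martingale, because $e^{2S}\le e^{4K}$. Taking $\E[\cdot\,|\,\FF_\tau]$, using $e^{2S}\ge e^{-4K}$, and passing to the limit by monotone convergence and bounded convergence gives
\[
 \min(\alpha,\beta)\,\E\Big[\int_\tau^T|Z_t|^2\dt\,\Big|\,\FF_\tau\Big]\le e^{4K}\big(e^{4K}-e^{-4K}\big)=e^{8K}-1 .
\]
So $Z\in\BMO$, and therefore $b\in\BMO$.

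\emph{Step 2: the sup bound.} Since $b\in\BMO$, Kazamaki's criterion makes $\cE(\int b\dd W)$ a uniformly integrable martingale. It therefore defines $\mathbb Q\sim\PP$ under which $W^{\mathbb Q}=W-\int b\dt$ is a Brownian motion, and $Z\in\BMO(\mathbb Q)$ by the BMO invariance under Girsanov. Componentwise,
\[
 \dd Y^i_t=\tfrac{\alpha_i}{2}(Z^i_t)^2\dt+Z^i_t\dd W^{\mathbb Q}_t .
\]
For the upper bound, $Y^i_t=\E^{\mathbb Q}\big[\xi^i-\int_t^T\frac{\alpha_i}{2}(Z^i)^2\ds\,|\,\FF_t\big]\le M$, since the stochastic integral is a $\mathbb Q$-martingale. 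For the lower bound, It\^o gives $\dd e^{-\alpha_iY^i}=-\alpha_ie^{-\alpha_iY^i}Z^i\dd W^{\mathbb Q}$. This is a bounded $\mathbb Q$-local martingale, hence a martingale, so $e^{-\alpha_iY^i_t}=\E^{\mathbb Q}[e^{-\alpha_i\xi^i}|\FF_t]\le e^{\alpha_iM}$, i.e.\ $Y^i_t\ge-M$. Both bounds hold for each $t$ almost surely and extend to all $t$ by continuity, giving $|Y^i_t|\le M$.

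\emph{Step 3: conclusion.} Now $|S|\le 2M$, so rerunning Step 1 with $K$ replaced by $M$ yields $\norm{Z}_{\BMO}^2\le(e^{8M}-1)/\min(\alpha,\beta)$, which is \eqref{eq:mainbounds}. Finally, \eqref{eq:allmoments} follows from the energy inequality for BMO martingales: $\E(\int_0^T|Z|^2\dt)^{p/2}\le C_p\norm{Z}_{\BMO}^{p}$ with $C_p$ universal, and the right side is uniform over $\xi\in X_M$ and all solutions.

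The main obstacle is the circularity in Step 2: the Girsanov change needed to read off $|Y^i|\le M$ requires $b\in\BMO$ a priori. I avoid it by running the $e^{2S}$ computation first with the crude bound $K$. The key observation making this work is that the exponent $2$ exactly cancels the cross term $b^2$ in $q_{\theta,1}+q_{\theta,2}$, so the drift is positive definite for every $\alpha,\beta>0$.
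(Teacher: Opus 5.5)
Your proposal is correct and follows essentially the same route as the paper. The $e^{2S}$ energy identity first gives $Z\in\BMO$ with a crude constant, a Girsanov change with a BMO kernel then yields $|Y^i|\le M$, and rerunning the energy estimate together with the energy inequality gives \eqref{eq:mainbounds} and \eqref{eq:allmoments}. The only difference is the kernel: the paper uses $\cE(\ell_i(Z)\cdot W)$ with $q_{\theta,i}(z)=z_i\ell_i(z)$, under which each $Y_i$ is itself a bounded martingale and both bounds follow at once, whereas you use the common kernel $b$ and get the upper bound from the submartingale property of $Y^i$ and the lower bound from the martingale $e^{-\alpha_iY^i}$ (the transform in \eqref{eq:coordinates}).
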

\begin{proof}
Put $S=Y_1+Y_2$ and $b=Z_1+Z_2$. It\^o's formula gives
\begin{equation}\label{eq:energyidentity}
 \dd e^{2S_t}
 =e^{2S_t}(\alpha Z_{1,t}^2+\beta Z_{2,t}^2)\dt
   +2e^{2S_t}b_t\dd W_t.
\end{equation}
Since $Y$ is bounded, localization and the conditional Fatou lemma applied to \eqref{eq:energyidentity} give $Z\in\BMO$.

Write
\[
 q_{\theta,i}(z)=z_i\ell_i(z),\qquad
 \ell_1(z)=(1-\alpha/2)z_1+z_2,\quad
 \ell_2(z)=z_1+(1-\beta/2)z_2.
\]
Since $\ell_i(Z)\cdot W$ is a continuous BMO martingale, its stochastic exponential is uniformly integrable~\cite{Kazamaki1994}. Under the measure with density $\cE(\ell_i(Z)\cdot W)_T$, the process $Y_i$ is a bounded martingale with terminal value $\xi_i$. Hence $|Y_i|\le M$.

We now apply \eqref{eq:energyidentity} using $e^{-4M}\le e^{2S}\le e^{4M}$. For every stopping time $\tau\le T$,
\[
 \min(\alpha,\beta)e^{-4M}
 \E\left[\int_\tau^T|Z_t|^2\dt\,\middle|\,\FF_\tau\right]
 \le e^{4M}-e^{-4M}.
\]
This is \eqref{eq:mainbounds}. Iteration of the conditional energy estimate gives
\begin{equation}\label{eq:factorial}
 \E\left[\Big(\int_\tau^T|Z_t|^2\dt\Big)^k\,\middle|\,\FF_\tau\right]
 \le k!\norm{Z}_{\BMO}^{2k},\qquad k\in\N,
\end{equation}
and implies \eqref{eq:allmoments}.
\end{proof}

\subsection{A Girsanov change of measure}
Define
\begin{equation}\label{eq:coordinates}
 U_i=e^{-\alpha_iY_i},\qquad V_i=-\alpha_iU_iZ_i,
 \qquad a_\theta(u)=\left(\frac1{\alpha u_1},\frac1{\beta u_2}\right),
 \qquad G_\theta(u)=u_1^{1/\alpha}u_2^{1/\beta}.
\end{equation}
Thus $a_\theta=\nabla\log G_\theta$. A direct calculation yields
\begin{equation}\label{eq:Uequation}
 \dd U_t=V_t\big(a_\theta(U_t)\cdot V_t\big)\dt+V_t\dd W_t,
 \qquad b_t+a_\theta(U_t)\cdot V_t=0.
\end{equation}
Let
\begin{equation}\label{eq:Q}
 D=\cE(b\cdot W)_T,\qquad \dd Q=D\dd\PP,
 \qquad W_t^Q=W_t-\int_0^tb_s\ds.
\end{equation}
Then $\dd U=V\dd W^Q$, and
\begin{equation}\label{eq:conditionalU}
 U_t=\E^Q[e^{-\alpha_\cdot\xi_\cdot}\mid\FF_t].
\end{equation}
In particular, $U$ lies in the closed convex hull of its terminal range. If a closed convex set is $\FF_\sigma$-measurable and contains $U_\tau$, then $U_t$ belongs to that set for $\sigma\le t\le\tau$.

\begin{lemma}[Moment estimates for the densities]\label{lem:density}
For all bounded solutions with terminal data in $X_M$, the densities \eqref{eq:Q} satisfy, with constants $r>1$, $\varepsilon>0$ and $C<\infty$ depending only on $M$ and $\theta$,
\begin{equation}\label{eq:densitymoments}
 \E D=1,\qquad \E D^r\le C,\qquad \E D^{-\varepsilon}\le C.
\end{equation}
The constants are uniform when $\theta$ ranges over a compact subset of $(0,\infty)^2$.
\end{lemma}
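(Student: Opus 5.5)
The plan is to rest everything on the uniform BMO bound of Proposition~\ref{prop:bounds} and on Kazamaki's theory of BMO martingales~\cite{Kazamaki1994}. Since $|b|\le\sqrt2|Z|$, Proposition~\ref{prop:bounds} gives
\[
\norm{b\cdot W}_{\BMO}^2\le 2\norm{Z}_{\BMO}^2\le K:=\frac{2(e^{8M}-1)}{\min(\alpha,\beta)}
\]
uniformly over all bounded solutions with terminal data in $X_M$. This bound is continuous in $\theta$, so it is uniform on compact subsets of $(0,\infty)^2$.

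\emph{Step 1: $\E D=1$.} Because $b\cdot W$ is a continuous BMO martingale, $\cE(b\cdot W)$ is a uniformly integrable martingale by Kazamaki's criterion. This gives $\E D=1$, as already used in the proof of Proposition~\ref{prop:bounds}.

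\emph{Step 2: $\E D^r\le C$.} I will use the reverse H\"older inequality $(R_r)$ from~\cite[Theorem~3.1]{Kazamaki1994}. If $\norm{b\cdot W}_{\BMO}<\Phi(r)$ for the explicit decreasing function $\Phi$ with $\Phi(1+)=\infty$, then
\[
\E\big[\cE_T^r\mid\FF_\tau\big]\le C_r\,\cE_\tau^r,
\]
where $C_r$ depends only on $r$ and the BMO norm. Choosing $r>1$ close to $1$ so that $\Phi(r)>\sqrt K$ and taking $\tau=0$ gives the bound. The only care needed is to record that $C_r$ depends only on an upper bound for the BMO norm, which is true of Kazamaki's explicit constant.

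\emph{Step 3: $\E D^{-\varepsilon}\le C$.} This is a negative moment, which the reverse H\"older inequality does not cover directly. One route is Kazamaki's $(A_p)$ condition: BMO implies $\cE$ satisfies $A_p$ for some $p>1$, which is equivalent to $\E[(\cE_\tau/\cE_T)^{1/(p-1)}\mid\FF_\tau]\le C$. A more self-contained route is
\[
D^{-\varepsilon}=\exp\Big(-\varepsilon\int_0^T b\dd W+\tfrac{\varepsilon}{2}\int_0^T b^2\ds\Big)=\cE(-\varepsilon b\cdot W)_T\,\exp\Big(\tfrac{\varepsilon+\varepsilon^2}{2}\int_0^T b^2\ds\Big).
\]
Applying Cauchy--Schwarz, the expectation is at most $\big(\E\,\cE(-\varepsilon b\cdot W)_T^2\big)^{1/2}\big(\E\exp\big((\varepsilon+\varepsilon^2)\int_0^T b^2\ds\big)\big)^{1/2}$. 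The second factor is bounded by the John--Nirenberg inequality. It is finite once $(\varepsilon+\varepsilon^2)K<1$, which follows from the factorial bound \eqref{eq:factorial} summed as a geometric series. For the first factor, $\cE(-\varepsilon b\cdot W)^2=\cE(-2\varepsilon b\cdot W)\exp\big(\varepsilon^2\int b^2\big)$. Another Cauchy--Schwarz, together with $\E\,\cE(-4\varepsilon b\cdot W)_T\le1$ (a supermartingale bound) and John--Nirenberg again, gives a bound for $\varepsilon$ small relative to $K$.

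The main obstacle is bookkeeping rather than conceptual. One has to check that every constant, including Kazamaki's $\Phi$-threshold for $r$ and the smallness of $\varepsilon$, depends only on the BMO bound $K$. Since $K$ depends continuously on $(\alpha,\beta)$ through $\min(\alpha,\beta)$, this yields uniformity on compact parameter sets.
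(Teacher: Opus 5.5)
Your proposal is correct and follows the paper's proof. Both use the uniform BMO bound on $b$ from Proposition~\ref{prop:bounds} and Kazamaki's reverse H\"older inequality for $\E D=1$ and $\E D^r\le C$. For the negative moment, both factor $D^{-\varepsilon}$ into a power of a stochastic exponential of $L=b\cdot W$ times $\exp(\lambda\langle L\rangle_T)$. This is followed by Cauchy--Schwarz, the supermartingale bound $\E\,\cE(\cdot)_T\le1$, and the factorial (John--Nirenberg) estimate \eqref{eq:factorial}.

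The only difference is efficiency. The paper's factorization $D^{-\varepsilon}=\cE(-2\varepsilon L)_T^{1/2}\exp\big((\varepsilon/2+\varepsilon^2)\langle L\rangle_T\big)$ needs a single Cauchy--Schwarz. Your split through $\cE(-\varepsilon L)$ needs a second step. That step works if you write $\cE(-\varepsilon L)_T^2=\cE(-4\varepsilon L)_T^{1/2}\exp(3\varepsilon^2\langle L\rangle_T)$ before applying Cauchy--Schwarz. A direct Cauchy--Schwarz on $\cE(-2\varepsilon L)_T\exp(\varepsilon^2\langle L\rangle_T)$ would instead produce $\E\,\cE(-2\varepsilon L)_T^2$, which is not bounded by $1$.
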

\begin{proof}
Proposition~\ref{prop:bounds} bounds $\norm{b}_{\BMO}$ uniformly. The martingale property and positive moment estimate follow from the reverse H\"older inequality for stochastic exponentials of continuous BMO martingales~\cite{Kazamaki1994}. For the negative moment, put $L=b\cdot W$ and $v=\langle L\rangle_T$. The identity
\[
 D^{-\varepsilon}
 =\cE(-2\varepsilon L)_T^{1/2}
   \exp\big((\varepsilon/2+\varepsilon^2)v\big)
\]
and Cauchy--Schwarz give
\[
 \E D^{-\varepsilon}
 \le\left(\E e^{(\varepsilon+2\varepsilon^2)v}\right)^{1/2}.
\]
Choose $\varepsilon$ so that $(\varepsilon+2\varepsilon^2)\norm{b}_{\BMO}^2<1$. The exponential series and \eqref{eq:factorial} bound the right-hand side. Uniformity over compact parameter sets follows from the uniform bound in Proposition~\ref{prop:bounds}.
\end{proof}

We record the two forms of H\"older's inequality used below. For $p\ge1$, a random variable $H$, and $r'=r/(r-1)$,
\begin{align}
 \E^Q|H|^p
 &\le(\E D^r)^{1/r}(\E|H|^{pr'})^{1/r'},\label{eq:PtoQ}\\
 \E|H|^p
 &\le(\E D^{-\varepsilon})^{1/(1+\varepsilon)}
 \left(\E^Q|H|^{p(1+\varepsilon)/\varepsilon}\right)^{\varepsilon/(1+\varepsilon)}.
 \label{eq:QtoP}
\end{align}
Consequently, if $(H_n)$ is uniformly bounded and converges to zero in $\PP$-probability, then $\E^{Q_n}|H_n|^p\to0$ for every $1\le p<\infty$, provided the densities of $Q_n$ satisfy \eqref{eq:densitymoments} uniformly in $n$. Conversely, convergence to zero in $L^p(Q_n)$ for every $1\le p<\infty$ implies convergence to zero in $L^p(\PP)$ for every such $p$, by \eqref{eq:QtoP}.

\subsection{Closedness of the solution graph}
Set $\mathscr E=\Sp^2\times\Hp^2$, with the sum norm.
\begin{lemma}\label{lem:closed}
The graph $\{(\xi,u):\xi\in X_M,\ u\in\Sol_\theta(\xi)\}$ is closed in $X_M\times\mathscr E$.
\end{lemma}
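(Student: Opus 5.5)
We take a sequence $(\xi_n,(Y^n,Z^n))$ in the graph converging in $X_M\times\mathscr E$ to $(\xi,(Y,Z))$ with $\xi\in X_M$. We must show that $(Y,Z)$ is a bounded solution with terminal value $\xi$. Since $X_M$ is closed in $L^2$, only the solution property and the boundedness of $Y$ need checking.

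\textbf{Boundedness.} By Proposition~\ref{prop:bounds}, $|Y^{n,i}_t|\le M$ for all $n$, $t$ and $i$. Convergence in $\Sp^2$ gives a subsequence along which $\sup_t|Y^n_t-Y_t|\to0$ almost surely. Hence $Y$ is continuous, adapted and satisfies $|Y^i_t|\le M$, so $Y\in\Sp^\infty$. Also $\int_0^T|Z_t|^2\dt<\infty$ almost surely, because $Z\in\Hp^2$.

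\textbf{Passage to the limit.} Fix $t$ and write the equation for $(Y^n,Z^n)$ with data $\xi_n$. We pass to the limit term by term in $L^1(\PP)$.
\begin{itemize}
\item $Y^n_t\to Y_t$ and $\xi_n\to\xi$ in $L^2$.
\item By the It\^o isometry, $\E|\int_t^T(Z^n_s-Z_s)\dd W_s|^2\le\norm{Z^n-Z}_{\Hp^2}^2\to0$.
\item For the generator we use $|q_\theta(z)-q_\theta(z')|\le C(|z|+|z'|)|z-z'|$, with $C$ depending only on $\theta$. Cauchy--Schwarz then gives
\[
 \E\int_0^T|q_\theta(Z^n_s)-q_\theta(Z_s)|\ds
 \le C\big(\norm{Z^n}_{\Hp^2}+\norm{Z}_{\Hp^2}\big)\norm{Z^n-Z}_{\Hp^2}.
\]
The right-hand side tends to $0$, since $\norm{Z^n}_{\Hp^2}$ is bounded (it converges, and is also bounded by \eqref{eq:allmoments}).
\end{itemize}
Therefore \eqref{eq:bsde} holds for $(Y,Z,\xi)$ almost surely for each fixed $t$. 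Both sides are continuous in $t$, so the identity holds simultaneously for all $t$ up to indistinguishability. Thus $(Y,Z)\in\Sol_\theta(\xi)$.

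\textbf{Main step.} The only real point is controlling the quadratic generator term. Local Lipschitz continuity with linear growth in $z$, combined with $\Hp^2$ convergence, suffices; no BMO bound on the limit is needed for this, because Proposition~\ref{prop:bounds} afterwards upgrades $Z$ to $\BMO$ automatically.
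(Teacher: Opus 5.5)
Your proof is correct and follows essentially the paper's argument: the local Lipschitz bound on $q_\theta$ combined with Cauchy--Schwarz in $\Hp^2$, convergence of the stochastic integrals, passage to the limit in the integral equation, and preservation of $|Y_i|\le M$ via the uniform bound of Proposition~\ref{prop:bounds}. The only difference is cosmetic. The paper also passes the conditional energy (BMO) bound to the limit directly. You instead note, correctly, that this bound is automatic from Proposition~\ref{prop:bounds}, since $\BMO$ membership is not part of the definition of $\Sol_\theta$.
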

\begin{proof}
Let $\xi_n\in X_M$ and $(Y^n,Z^n)\in\Sol_\theta(\xi_n)$, and suppose that $\xi_n\to\xi$ in $L^2$ and $(Y^n,Z^n)\to(Y,Z)$ in $\mathscr E$. Since $q_\theta$ is quadratic,
\[
 \E\int_0^T|q_\theta(Z^n)-q_\theta(Z)|\dt
 \le C_\theta(\norm{Z^n}_{\Hp^2}+\norm{Z}_{\Hp^2})
              \norm{Z^n-Z}_{\Hp^2}\longrightarrow0.
\]
The stochastic integrals converge in $\Sp^2$. Thus the integral equation passes to the limit, first in probability uniformly in time and then, along a subsequence, almost surely. The bound $|Y_i|\le M$ is preserved. For each stopping time $\tau$, convergence of $Z^n$ in $\Hp^2$ implies $L^1$ convergence of $\int_\tau^T|Z_t^n|^2\dt$, so the conditional bound \eqref{eq:mainbounds} passes to the limit as well.
\end{proof}

\section{A dense class of terminal data admitting bounded solutions}\label{sec:core}

\subsection{Solutions on random line segments}
For a segment $A+e[0,1]$ contained in the positive quadrant, define
\begin{equation}\label{eq:gamma}
 \gamma_{A,e}(r)
 =\frac{\displaystyle\int_0^rG_\theta(A+ex)^{-2}\dd x}
        {\displaystyle\int_0^1G_\theta(A+ex)^{-2}\dd x},
 \qquad 0\le r\le1.
\end{equation}
This is an increasing smooth bijection of $[0,1]$. If the segments lie in a fixed compact rectangle in $(0,\infty)^2$, $\gamma'$, $1/\gamma'$ and $\gamma''$ have uniform bounds. Indeed, $G_\theta$ is bounded above and away from zero there, and $|e|$ is bounded by the diameter of the rectangle.

\begin{lemma}[Segment formula]\label{lem:segment}
Let $\sigma\le\tau$ be stopping times. Suppose $A,e$ are $\FF_\sigma$-measurable, $e\ne0$, and $A+e[0,1]$ lies in a fixed compact rectangle in $(0,\infty)^2$. If $R_\tau$ is $\FF_\tau$-measurable with values in $[0,1]$, there is a unique bounded solution on $[\sigma,\tau]$ with transformed terminal value $U_\tau=A+eR_\tau$. It is given by
\begin{equation}\label{eq:segmentformula}
 P_t=\E[\gamma_{A,e}(R_\tau)\mid\FF_t],\qquad
 U_t=A+e\gamma_{A,e}^{-1}(P_t),\qquad \sigma\le t\le\tau.
\end{equation}
The process $Z$ belongs to BMO on that interval.
\end{lemma}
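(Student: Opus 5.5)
The plan is to verify directly that \eqref{eq:segmentformula} defines a bounded solution, and then to show that every bounded solution must have this form. Both parts rest on one scalar identity for $\varphi:=\gamma_{A,e}^{-1}$ (we write $\gamma=\gamma_{A,e}$). Put $c=\int_0^1G_\theta(A+ex)^{-2}\dd x$, so $\gamma'(r)=G_\theta(A+er)^{-2}/c$. Since $a_\theta=\nabla\log G_\theta$, we have $\frac{\mathrm d}{\mathrm dr}\log G_\theta(A+er)=a_\theta(A+er)\cdot e$. Hence
\[
\gamma''=-2\,(a_\theta\cdot e)\,\gamma', \qquad \varphi'=\frac{1}{\gamma'\circ\varphi}, \qquad \varphi''=2\,(a_\theta\cdot e)\,(\varphi')^2,
\]
where $a_\theta$ is evaluated at $A+e\varphi$ in the last identity. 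Because $A$ and $e$ are $\FF_\sigma$-measurable and we work only on $[\sigma,\tau]$, they act as frozen parameters in It\^o's formula. The uniform bounds on $\gamma',1/\gamma',\gamma''$ on the compact rectangle keep every coefficient bounded.

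\emph{Existence.} Let $P_t=\E[\gamma(R_\tau)\mid\FF_t]$. This is a $[0,1]$-valued martingale, so martingale representation gives $P_t=P_\sigma+\int_\sigma^t\pi_s\dd W_s$. Moreover $\pi\in\BMO$, since $\E[\int_\rho^\tau\pi^2\ds\mid\FF_\rho]\le\E[P_\tau^2\mid\FF_\rho]\le1$. Set $U=A+e\varphi(P)$. It\^o's formula gives
\[
\dd U=e\varphi'(P)\pi\dd W+\tfrac12 e\varphi''(P)\pi^2\dt .
\]
Put $V=e\varphi'(P)\pi$. The identity for $\varphi''$ turns the drift into $V\,(a_\theta(U)\cdot V)\dt$, which is exactly \eqref{eq:Uequation}. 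Now define $Y_i=-\alpha_i^{-1}\log U_i$ and $Z_i=-V_i/(\alpha_iU_i)$. The computation behind \eqref{eq:Uequation} is reversible, so $(Y,Z)$ solves \eqref{eq:bsde} on $[\sigma,\tau]$. Since $U$ stays on the segment inside the rectangle, $Y$ is bounded and $|Z|\le C|\pi|$, which gives $Z\in\BMO$. Finally $U_\tau=A+e\varphi(\gamma(R_\tau))=A+eR_\tau$.

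\emph{Uniqueness.} Take any bounded solution with $U_\tau=A+eR_\tau$. The segment $A+e[0,1]$ is a closed convex $\FF_\sigma$-measurable set containing $U_\tau$. By the remark after \eqref{eq:conditionalU}, $U_t$ lies on it for all $t\in[\sigma,\tau]$. So $U_t=A+eR_t$ with $R_t=(U_t-A)\cdot e/|e|^2$, a $[0,1]$-valued continuous semimartingale. Let $e^\perp$ be orthogonal to $e$. The process $U\cdot e^\perp$ is constant on $[\sigma,\tau]$, so its martingale part vanishes, that is, $V\cdot e^\perp=0$ $\dt\otimes\dd\PP$-a.e. Hence $V=e\rho$ with $\rho=V\cdot e/|e|^2$. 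Substituting into \eqref{eq:Uequation} yields
\[
\dd R=\rho^2(a_\theta(U)\cdot e)\dt+\rho\dd W .
\]
Applying It\^o to $\gamma(R)$, the drift is $\rho^2\big(\gamma'(a_\theta\cdot e)+\tfrac12\gamma''\big)=0$ by the identity for $\gamma''$. Thus $\gamma(R)$ is a bounded local martingale, hence a martingale, and equals $\E[\gamma(R_\tau)\mid\FF_t]=P_t$. Therefore $U=A+e\varphi(P)$, which determines $Y$, and $V$ (hence $Z$) is identified from the martingale part.

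I expect the main obstacle to be the uniqueness step: the confinement to the segment and the resulting reduction $V\parallel e$. This needs the convex-hull remark to be applied conditionally between the two stopping times, with an $\FF_\sigma$-measurable set, under the measure $Q$. I would justify it via \eqref{eq:conditionalU} restricted to $[\sigma,\tau]$: $U_t=\E^Q[U_\tau\mid\FF_t]$ for $t\ge\sigma$, and the segment is convex, closed and $\FF_t$-measurable. The measurability of random-parameter It\^o formulas is routine once everything is localized after $\sigma$.
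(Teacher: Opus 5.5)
Your proposal is correct and follows essentially the same route as the paper: confinement of $U$ to the $\FF_\sigma$-measurable segment under $Q$, projection onto $e$ to get the scalar equation for $R$, the identity $\gamma''=-2(a_\theta\cdot e)\gamma'$ making $\gamma(R)$ a bounded martingale, and the converse construction via martingale representation and It\^o's formula for $\gamma^{-1}$ (the paper proves uniqueness before existence, and it localizes on the events $\{|e|\ge1/m\}$ where you call this routine). The one step you leave implicit is that, before using $Q$ on $[\sigma,\tau]$, you must show that an arbitrary bounded solution has $Z\in\BMO$ there, so that $Q$ is a genuine probability measure; the paper obtains this by rerunning the energy identity \eqref{eq:energyidentity} of Proposition~\ref{prop:bounds} on the stochastic interval.
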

\begin{proof}
Consider any bounded solution. The argument of Proposition~\ref{prop:bounds}, applied on $[\sigma,\tau]$, gives $Z\in\BMO$. The change of measure \eqref{eq:Q}, with its kernel restricted to this interval, implies that $U$ is a bounded martingale taking values in the $\FF_\sigma$-measurable segment. Projection onto its direction gives $U=A+eR$ and $V=eH$, where $R=e\cdot(U-A)/|e|^2$ and $H=e\cdot V/|e|^2$. These identities and the following It\^o calculation may first be established on the $\FF_\sigma$-measurable events $\{|e|\ge1/m\}$ and then extended by letting $m\to\infty$. Equation \eqref{eq:Uequation} becomes
\[
 \dd R=k(R)H^2\dt+H\dd W,
 \qquad k(r)=a_\theta(A+er)\cdot e.
\]
Since $\gamma''=-2k\gamma'$, It\^o's formula makes $\gamma(R)$ a bounded $\PP$-martingale. Thus every bounded solution satisfies \eqref{eq:segmentformula}.

Conversely, let $\dd P=\zeta\dd W$ on $[\sigma,\tau]$ and put
\[
 R=\gamma^{-1}(P),\quad H=\frac{\zeta}{\gamma'(R)},\quad
 V=eH,\quad Y_i=-\alpha_i^{-1}\log U_i,\quad
 Z_i=-\frac{V_i}{\alpha_iU_i}.
\]
It\^o's formula applied to $\gamma^{-1}$ verifies \eqref{eq:Uequation} and hence \eqref{eq:bsde}. The multipliers of $\zeta$ in $Z$ are uniformly bounded. As $0\le P\le1$, we have $\zeta\in\BMO$, proving the assertion. All parameters of $\gamma$ are $\FF_\sigma$-measurable, so the formula applies with these random parameters on the stochastic interval.
\end{proof}

\begin{definition}\label{def:core}
A bounded solution is a \emph{piecewise segment solution} if there are a deterministic integer $N$, stopping times
\[
 0=\sigma_0\le\sigma_1\le\cdots\le\sigma_N=T,
\]
and $\FF_{\sigma_j}$-measurable pairs $(A_j,e_j)$ such that its exponential transform $U$ lies in $A_j+e_j[0,1]$ on $[\sigma_j,\sigma_{j+1}]$. The segments must lie in one deterministic compact rectangle in $(0,\infty)^2$, and their lengths must be bounded below by a positive deterministic constant. Constant pieces may be represented using any such segment through the constant value. Let $\mathcal D_M^\theta$ be the set of their terminal values that belong to $X_M$.
\end{definition}

The segment may extend beyond the range of $U$. We use this to obtain a uniform lower bound on segment lengths after stopping.

\subsection{Markovian solutions with parameters and cylinder terminal data}
\begin{proposition}[Existence for cylinder terminal data]\label{prop:cylinders}
Let $0<t_1<\cdots<t_m=T$ and let $g:\R^m\to\R^2$ be bounded and globally H\"older continuous, with $|g_i|\le M$. Then the terminal datum
\[
 \xi=g(W_{t_1},\ldots,W_{t_m})
\]
has a bounded solution. This solution satisfies \eqref{eq:mainbounds}.
\end{proposition}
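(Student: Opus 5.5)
Our plan is to build the solution by backward induction over the observation times $t_m>t_{m-1}>\cdots>t_1>t_0:=0$, solving on each interval $[t_{k-1},t_k]$ a Markovian quadratic system with the past observations as frozen parameters. This is the concatenation of Markovian solutions described in the introduction. On the last interval $[t_{m-1},T]$, fix $x'=(x_1,\dots,x_{m-1})\in\R^{m-1}$. We solve the Markovian BSDE with generator $q_\theta$ and terminal function $x\mapsto g(x',x)$ of $W_T$, started from $W_{t_{m-1}}$. The result of Xing and \v Zitkovi\'c~\cite{XZ2018} applies to \eqref{eq:generator} for all positive parameters and bounded H\"older terminal functions. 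It gives a bounded solution $Y_t=u_m(x';t,W_t)$, where $u_m(x';t,\cdot)$ is bounded and H\"older continuous in space, and the H\"older norm at $t=t_{m-1}$ is controlled by the H\"older norm of $g$ and by $M$. The comparison-free bound $|Y_i|\le M$ comes from Proposition~\ref{prop:bounds}. It holds for any bounded solution, so it applies to the Markovian one.

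The key step is to show that the new terminal function
\[
 g_{m-1}(x_1,\dots,x_{m-1}):=u_m\big((x_1,\dots,x_{m-1});t_{m-1},x_{m-1}\big)
\]
is again bounded by $M$ and globally H\"older continuous in all $m-1$ variables, jointly in the parameters $x'$ and the spatial variable. Spatial H\"older regularity comes from the estimates in~\cite{XZ2018}. For the parameters, we would compare the solutions with terminal data $g(x',\cdot)$ and $g(y',\cdot)$. These differ by at most $C|x'-y'|^\gamma$ uniformly. We would then use a stability estimate for bounded solutions with uniformly bounded BMO norms, as in Proposition~\ref{prop:bounds}, to get $|u_m(x')-u_m(y')|\le C|x'-y'|^{\gamma'}$. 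If a direct Lipschitz stability in $L^\infty$ is not available, we would fall back on the H\"older stability in~\cite{XZ2018}. Alternatively, we would use the exponential transform: $U$ is a $Q$-martingale (by \eqref{eq:conditionalU}), and Lemma~\ref{lem:density} gives uniform reverse H\"older bounds. We expect this joint H\"older regularity in the frozen parameters to be the main obstacle. The point is that the H\"older exponent may degrade at each step, but only finitely many ($m$) steps occur, so a positive exponent survives.

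Iterating down to $[0,t_1]$ gives Markovian solutions on each interval with parameters $(W_{t_1},\dots,W_{t_{k-1}})$ substituted. The substitution is legitimate since these are $\FF_{t_{k-1}}$-measurable and the Markovian solutions depend measurably (indeed continuously) on parameters. Concatenating them, with $Y$ continuous at each $t_k$ by construction of $g_{k}$, gives a process $(Y,Z)$ solving \eqref{eq:bsde} on $[0,T]$ with $Y$ bounded and $\int_0^T|Z|^2\dt<\infty$. Proposition~\ref{prop:bounds} then yields \eqref{eq:mainbounds}.
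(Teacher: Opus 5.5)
Your overall architecture matches the paper: backward induction over the observation times with past observations frozen as parameters, existence and a priori H\"older bounds from \cite{XZ2018}, the bounds from Proposition~\ref{prop:bounds}, and concatenation. The gap is exactly at the step you call the main obstacle, the regularity of $g_{m-1}$ in the frozen parameters. None of the mechanisms you offer closes it.

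\begin{itemize}
\item \emph{Stability from a uniform BMO bound.} This cannot work as stated. Proposition~\ref{prop:bounds} gives such a bound for \emph{every} bounded solution with data in $X_M$. Applying your estimate to two solutions with the same terminal datum would therefore give uniqueness for all bounded terminal data. That is not known off the curve $1/\alpha+1/\beta=1$, and the paper obtains it only generically.
\item \emph{The exponential-transform route.} It fails for the same reason. Each $U$ is a martingale under a measure $Q$ built from $b=Z_1+Z_2$ of \emph{that} solution. Two solutions are martingales under different measures, so Lemma~\ref{lem:density} gives no comparison between them.
\item \emph{The structural obstruction.} The difference solves $\Delta Y_t=\Delta Y_d+\int_t^d A_s\Delta Z_s\ds-\int_t^d\Delta Z_s\dd W_s$ with the matrix coefficient $A=Dq_\theta((Z^p+Z^{p'})/2)$. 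A mere bound on $\norm{A}_{\BMO}$ does not by itself control $\Delta Y$ by $\Delta Y_d$; if it did, the first objection would apply.
\item \emph{The fallback to ``H\"older stability in \cite{XZ2018}''.} You do not specify this result. The paper uses that reference only for existence and the a priori H\"older estimate of a single solution.
\end{itemize}

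The missing idea is \emph{smallness} of the BMO norm on short intervals, uniformly in the parameter. This holds for the constructed Markovian solutions, not for arbitrary bounded ones.

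\begin{enumerate}
\item \cite[Theorem 2.5]{XZ2018}, with a Lyapunov pair independent of $p$ and translation invariance, gives \eqref{eq:markovholder}. This includes H\"older continuity in \emph{time}, uniformly in $p$ and the starting point.
\item Insert this into the energy identity \eqref{eq:energyidentity} with $\varphi(y)=e^{2(y_1+y_2)}$. This gives $\E[\int_\tau^d|Z^p_s|^2\ds\mid\FF_\tau]\le C(d-c)^{\delta_0/2}$.
\item Choose a finite deterministic partition with $\norm{A}_{\BMO[c,d]}\le1/4$ on each piece, uniformly in $p,p'$.
\item On each piece, conditional expectation and It\^o's formula for $|\Delta Y|^2$ give $x\le h+a_0z$ and $z^2\le h^2+2a_0xz$, hence $x,z\le 2h$.
\item Iterating over the partition yields $\sup_{t,x}|y^p-y^{p'}|\le C\sup_x|g(p,x)-g(p',x)|$. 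This is Lipschitz in the terminal function, with no loss of the parameter exponent.
\end{enumerate}

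Separately, your claim that the solutions depend ``measurably (indeed continuously)'' on the parameter needs an argument for $Z$. Substituting the random $\FF_{t_{k-1}}$-measurable parameter requires a jointly Borel version of $z^p(t,x)$. The paper obtains it by identifying $z^p$ with the weak spatial derivative of the jointly continuous $y$ and mollifying. It then uses transition densities and independence of the Brownian increments to justify the substitution.
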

\begin{proof}
We first construct Markovian solutions with a parameter on a deterministic interval $[a,b]$. Let $g(p,x)$ be bounded and globally H\"older continuous in $(p,x)\in\R^k\times\R$. For each fixed $p$, the Markovian existence theorem of Xing and \v Zitkovi\'c~\cite[Theorem 2.14]{XZ2018} applies. In their notation, the generator has the quadratic-linear form
\[
 q_{\theta,i}(z)=z_i\ell_i(z),
\]
so it satisfies (BF). The vectors $e_1,-e_1,e_2,-e_2$ positively span $\R^2$, and (wAB) follows by taking the corresponding linear functions to be $\ell_i$. The local Lipschitz estimate
\[
 |q_\theta(z)-q_\theta(z')|
 \le C_\theta(|z|+|z'|)|z-z'|
\]
also holds. Denote the deterministic functions representing the Markovian solution by $y^p(t,x)$ and $z^p(t,x)$.

Proposition~\ref{prop:bounds} gives $|y_i^p|\le M$, uniformly in $p$ and in the Brownian starting point. Proposition 2.11 of \cite{XZ2018} gives a Lyapunov pair independent of $p$ on the ball of radius $2\sqrt2 M$. The quantitative H\"older estimate in \cite[Theorem 2.5]{XZ2018} therefore gives constants $C<\infty$ and $\delta_0\in(0,1]$ such that
\begin{equation}\label{eq:markovholder}
 |y^p(t,x)-y^p(s,x')|
 \le C\big(|t-s|^{\delta_0/2}+|x-x'|^{\delta_0}\big).
\end{equation}
The constants are independent of $p$ and the spatial center: the terminal functions have common supremum and H\"older bounds, and both the generator and Brownian dynamics are translation invariant. Bounds for large spatial distances follow from boundedness.

The energy identity \eqref{eq:energyidentity} also gives uniform control on short intervals. Put $\varphi(y)=e^{2(y_1+y_2)}$. For $c\le\tau\le d\le b$, where $c,d$ are deterministic and $\tau$ is a stopping time, It\^o's formula and \eqref{eq:markovholder} give
\begin{align}
 \E\left[\int_\tau^d|Z_s^p|^2\ds\,\middle|\,\FF_\tau\right]
 &\le C\E[|\varphi(Y_d^p)-\varphi(Y_\tau^p)|\mid\FF_\tau]\notag\\
 &\le C(d-c)^{\delta_0/2}.
 \label{eq:shortbmo}
\end{align}
The last step uses conditional moments of Brownian increments. The constants are uniform in $p$ and the Brownian starting point.

We next compare the solutions for fixed parameters $p,p'$. On a deterministic subinterval $[c,d]$, their difference satisfies
\[
 \Delta Y_t=\Delta Y_d+\int_t^d A_s\Delta Z_s\ds
                   -\int_t^d\Delta Z_s\dd W_s,
 \qquad A_s=Dq_\theta((Z_s^p+Z_s^{p'})/2).
\]
Choose a finite deterministic partition so that $\norm{A}_{\BMO[c,d]}\le1/4$ on each piece, uniformly over $p,p'$, using \eqref{eq:shortbmo}. If
\[
 x=\norm{\Delta Y}_{\Sp^\infty[c,d]},\quad
 z=\norm{\Delta Z}_{\BMO[c,d]},\quad
 h=\norm{\Delta Y_d}_\infty,\quad a_0=\norm{A}_{\BMO[c,d]},
\]
conditional expectation and It\^o's formula yield
\[
 x\le h+a_0z,\qquad z^2\le h^2+2a_0xz.
\]
For $a_0\le1/4$ these imply $x,z\le2h$. Iterating over the partition, from any Brownian starting point, gives
\begin{equation}\label{eq:paramholder}
 \sup_{t,x}|y^p(t,x)-y^{p'}(t,x)|
 \le C_g\sup_x|g(p,x)-g(p',x)|.
\end{equation}
Thus $y(p,t,x)$ is jointly continuous and globally H\"older in its spatial and parameter variables.

The functions $z^p$ can be chosen jointly Borel measurable in $(p,t,x)$. The construction in \cite[Theorem 2.8]{XZ2018} identifies $z^p$ with the weak spatial derivative of $y^p$. Mollify the jointly continuous function $y$ in $(t,x)$ on an exhaustion of the interior interval, take the limit of the derivatives where it exists, and set it to zero elsewhere. For every fixed $p$, this recovers $z^p$ at almost every $(t,x)$. For every Brownian starting point, transition densities make these modifications null along the path in $\dt\otimes\dd\PP$, preserving the equation and stochastic integral. Independence of the increments after $a$ from $\FF_a$ then permits conditioning and substitution of any $\FF_a$-measurable parameter and initial value.

For $m=1$, the Markovian existence result already proves the proposition. For $m\ge2$, proceed backward through the observation times. On $[t_{m-1},T]$, freeze $p=(W_{t_1},\ldots,W_{t_{m-1}})$ and use the preceding construction. Its value at $t_{m-1}$ is
\[
 g_{m-1}(p)=y^p(t_{m-1},p_{m-1}).
\]
Equations \eqref{eq:markovholder} and \eqref{eq:paramholder} show that $g_{m-1}$ is bounded and globally H\"older. Each component of $g_{m-1}$ remains bounded in absolute value by $M$. Repeat on the preceding interval, finitely many times. The pieces agree at their endpoints and concatenate into a bounded adapted solution. Applying Proposition~\ref{prop:bounds} on the entire time horizon gives the asserted estimate.
\end{proof}

\begin{corollary}\label{cor:cylinderdensity}
For every $\xi\in X_M$ there are smooth bounded cylinder data $\xi_n\in X_M$, with bounded solutions, such that $\xi_n\to\xi$ in $L^p$ for every $1\le p<\infty$.
\end{corollary}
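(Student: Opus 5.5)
The plan is to approximate $\xi$ in two stages. First by a bounded cylinder functional of the Brownian path, then by a smooth one, making sure the values stay in $[-M,M]^2$ throughout. Proposition~\ref{prop:cylinders} then supplies the bounded solutions, because smooth bounded functions with bounded derivatives are globally H\"older.

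\textbf{Stage 1: cylinder approximation.} Take dyadic times $t_k^{(n)}=kT/2^n$, $k=1,\dots,2^n$, and let $\mathcal G_n=\sigma(W_{t_1^{(n)}},\dots,W_{t_{2^n}^{(n)}})$. These $\sigma$-fields increase, and they generate $\FF_T$ up to null sets: $W$ is continuous, so every $W_t$ is a limit of dyadic values. Martingale convergence then gives $\E[\xi\mid\mathcal G_n]\to\xi$ in $L^2$. Since $|\xi_i|\le M$ and conditional expectation is bounded by the same constant, $|\E[\xi_i\mid\mathcal G_n]|\le M$, and the convergence holds in every $L^p$ by boundedness. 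By Doob--Dynkin, $\E[\xi\mid\mathcal G_n]=h_n(W_{t_1^{(n)}},\dots,W_{t_{2^n}^{(n)}})$ for a Borel $h_n:\R^{2^n}\to[-M,M]^2$. The last observation time is $T$, which matches the hypothesis $t_m=T$ of Proposition~\ref{prop:cylinders}.

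\textbf{Stage 2: smoothing.} Fix $n$ and let $\mu$ be the law of the Gaussian vector $G=(W_{t_1^{(n)}},\dots,W_{t_{2^n}^{(n)}})$ on $\R^{2^n}$. The function $h_n$ is bounded and measurable, hence in $L^2(\mu)$. Continuous compactly supported functions are dense in $L^2(\mu)$, so I choose a continuous compactly supported $\tilde h$ within $1/n$ of $h_n$. I then mollify it to a $C_c^\infty$ function within $2/n$. Finally I compose componentwise with the clamp $\pi(y)=\max(-M,\min(M,y))$, which is $1$-Lipschitz and fixes the range of $h_n$. Clamping does not increase the distance to $h_n$ but breaks smoothness. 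So instead I would use a smooth nondecreasing $1$-Lipschitz map $\pi_\delta:\R\to[-M,M]$ with $|\pi_\delta(y)-\pi(y)|\le\delta$. Then $g=\pi_\delta\circ(\text{mollified }\tilde h)$ is smooth, bounded by $M$ in each component, and globally Lipschitz, hence globally H\"older. Its $L^2(\mu)$-distance to $h_n$ is at most $2/n+\delta\sqrt2$.

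\textbf{Conclusion.} Set $\xi_n=g_n(G)$ with $\delta=1/n$. Then $\xi_n\in X_M$ and $\|\xi_n-\xi\|_{L^2}\to0$. Boundedness by $M$ upgrades this to convergence in every $L^p$, and for $p<2$ Jensen suffices. Proposition~\ref{prop:cylinders} gives each $\xi_n$ a bounded solution satisfying \eqref{eq:mainbounds}.

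The only step needing care is the bound constraint: the smoothed approximants must remain in $X_M$ while staying smooth and H\"older. The smooth $1$-Lipschitz truncation $\pi_\delta$ handles this.
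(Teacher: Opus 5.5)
Your proof is correct. Stage 1 is exactly the paper's first step: condition on finitely many Brownian observations whose union generates $\FF_T$, then represent the result as a bounded Borel function $h_n$ of a Gaussian vector. The final appeal to Proposition~\ref{prop:cylinders} is also the same.

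The difference lies only in the smoothing step. The paper convolves $h_n$ directly with a positive smooth kernel of unit mass. Because this averages values of $h_n$, the result stays in $[-M,M]^2$ automatically and is smooth, bounded and globally Lipschitz. It converges to $h_n$ Lebesgue-a.e., and hence in probability under $\mu$. That last transfer uses, implicitly, that $(W_{t_1},\dots,W_{t_m})$ with distinct positive times has a nondegenerate, absolutely continuous law.

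Your route goes through density of $C_c$ in $L^2(\mu)$. It needs no absolute continuity and would work for any finite Borel law on $\R^m$. The price is that you must restore the bound constraint afterwards.

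The smooth truncation $\pi_\delta$ is not needed for that. Clamp $\tilde h$ first and then mollify. Since $\pi(0)=0$, the clamped function is still continuous and compactly supported, and its distance to $h_n$ does not increase. Mollifying a $[-M,M]^2$-valued function keeps it $[-M,M]^2$-valued, which gives the same conclusion with one fewer approximation.
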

\begin{proof}
Condition $\xi$ on increasing finite sets of Brownian observations whose union generates $\FF_T$. The conditional means retain the component bounds and converge in $L^p$ for every $1\le p<\infty$. Write each conditional mean as a bounded Borel function of its finite Gaussian vector. Convolution with a positive smooth kernel preserves the bounds and gives a smooth globally Lipschitz function, converging in probability as the smoothing parameter tends to zero. A diagonal choice and Proposition~\ref{prop:cylinders} finish the proof.
\end{proof}

\subsection{Forward approximation by segment solutions}
We state the next approximation result for a compact convex set $K$ so that it also applies to the triangle used in Section~\ref{sec:finite}.

\begin{lemma}\label{lem:forward}
Let $(Y,Z)$ be a bounded solution whose exponential transform $U$ stays in a deterministic compact set $K_0$. Let $K$ be a compact convex subset of the positive quadrant such that $K_0\subset\operatorname{int}K$. Then there are piecewise segment solutions $(Y^n,Z^n)$ whose exponential transforms stay in $K$ and such that
\[
 \norm{Y^n-Y}_{\Sp^p}+\norm{Z^n-Z}_{\Hp^p}\longrightarrow0,
 \qquad 1\le p<\infty.
\]
\end{lemma}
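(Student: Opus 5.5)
We build $U^n$ forward in time by approximating the given transform $U$ by a process that moves along random segments. The segments are chosen at stopping times from the current state, so the terminal data are never prescribed in advance. Fix $\delta>0$ smaller than $\dist(K_0,\partial K)/4$. Define stopping times
\[
\sigma_0=0,\qquad \sigma_{j+1}=\inf\{t\ge\sigma_j: |U_t-U_{\sigma_j}|\ge\delta\}\wedge(\sigma_j+h)\wedge T,
\]
where $h>0$ is a small mesh. Since $U$ is continuous, only finitely many steps are needed almost surely. We cap the count at a deterministic $N$ and set $\sigma_N=T$. The event that the cap is binding has probability tending to zero as $N\to\infty$, by the BMO bounds on $V$ from Proposition~\ref{prop:bounds}.

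On $[\sigma_j,\sigma_{j+1}]$ we want $U^n$ to track $U$ while staying on an $\FF_{\sigma_j}$-measurable segment. Let $A_j$ be the current value $U^n_{\sigma_j}$. We project the increment $U_t-U_{\sigma_j}$ onto a single direction $e_j/|e_j|$. The natural choice is the direction of $V_{\sigma_j}$, or some $\FF_{\sigma_j}$-measurable approximation of it, since locally $\dd U\approx V_{\sigma_j}\dd W$. We then extend the segment to a fixed length $\delta$ in both directions, which keeps the lengths bounded below deterministically. On this segment we solve the exact segment BSDE of Lemma~\ref{lem:segment} with terminal point
\[
R_{\sigma_{j+1}}=\text{clip}_{[0,1]}\big(\text{projected coordinate of } A_j+(U_{\sigma_{j+1}}-U_{\sigma_j})\big).
\]
This defines $U^n$ on the whole interval, and it stays in $K$ provided the accumulated drift $U^n-U$ stays below $\delta$. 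If the drift ever exceeds $\delta$, we freeze $U^n$ from that point on. This is a constant piece, which is allowed.

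The error analysis compares $Y^n$ with $Y$ through $U^n-U$. On each piece, $U$ itself is not on the segment, but its transverse motion is controlled by $\int|V_t-V_{\sigma_j}|^2\dt$ over the interval. This quantity is small in $L^1$ once the mesh $h\to0$, since $V$ can be approximated in $\Hp^2$ by simple predictable processes. We also choose $e_j$ from such a simple approximation, which makes $e_j$ measurable at the grid times. The segment solution follows the conditional-expectation formula \eqref{eq:segmentformula}. Its drift term $k(R)H^2$ matches the $V(a_\theta\cdot V)$ drift of $U$ up to the same transverse error. It\^o's formula then gives
\[
\E|U^n_{\sigma_{j+1}}-U_{\sigma_{j+1}}|^2\le \E|U^n_{\sigma_j}-U_{\sigma_j}|^2+\text{(transverse error on piece $j$)}.
\]
Summing over the pieces bounds $\sup|U^n-U|$ in $L^2$ by the total transverse error. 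The probability of freezing therefore tends to zero, and $Z^n\to Z$ in $\Hp^2$ follows from the martingale parts. Because everything is uniformly bounded and the BMO estimates are uniform (Proposition~\ref{prop:bounds}, estimate \eqref{eq:factorial}), convergence in probability upgrades to $\Sp^p\times\Hp^p$ for every $p$. Finally, $Y^n=-\alpha_i^{-1}\log U^n_i$ is Lipschitz in $U^n$ on $K$.

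The main obstacle is the drift mismatch. The segment solution is exact only for its own martingale $P$, whereas $U$ has a nonzero quadratic drift transverse to the segment. Making the local one-dimensional approximation rigorous, with errors summing to zero uniformly over random stopping-time grids, is delicate. If the direct It\^o comparison fails, the fallback is to work under the measure $Q$ of \eqref{eq:Q}, where $U$ is a martingale. Under $Q$ the comparison reduces to martingale projections, and Lemma~\ref{lem:density} transfers the estimates back to $\PP$.
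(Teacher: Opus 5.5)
Your proposal has a genuine gap at the concatenation step. On $[\sigma_j,\sigma_{j+1}]$ you prescribe the terminal point $R_{\sigma_{j+1}}$ and solve the segment problem of Lemma~\ref{lem:segment} backward. That lemma then \emph{determines} the left-endpoint value: it equals $A+e\,\gamma_{A,e}^{-1}\big(\E[\gamma_{A,e}(R_{\sigma_{j+1}})\mid\FF_{\sigma_j}]\big)$, where $A+e[0,1]$ is your extended segment. For consecutive pieces to glue into a continuous process, this value must equal the current state $A_j=U^n_{\sigma_j}$. Writing $r_j$ for the coordinate of $A_j$ on the segment, the condition is $\E[\gamma_{A,e}(R_{\sigma_{j+1}})\mid\FF_{\sigma_j}]=\gamma_{A,e}(r_j)$. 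Your clipped projection of $A_j+(U_{\sigma_{j+1}}-U_{\sigma_j})$ does not satisfy it, for three reasons:
\begin{itemize}
\item $U$ has a nonzero $\PP$-drift and transverse motion;
\item clipping is nonlinear;
\item $\gamma_{A,e}$ is built on a segment through $A_j$ rather than through $U_{\sigma_j}$.
\end{itemize}
So $U^n$ jumps at the $\sigma_j$ and is not a solution on $[0,T]$. A purely backward construction does not rescue this either. The segment on $[\sigma_j,\sigma_{j+1}]$ must be fixed at time $\sigma_j$ and pass through the state there, but a backward solve only outputs that state after the segment has been chosen.

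The missing idea is to build the approximant \emph{forward}, prescribing its martingale part rather than its terminal values. This is what the paper does.
\begin{itemize}
\item Approximate $V$ in $\Hp^2$ by simple predictable processes $V^n=\sum_j v_j^n\ind_{(t_j^n,t_{j+1}^n]}$ on deterministic grids.
\item Extend $a_\theta$ to a bounded Lipschitz field $a_*$ and solve $\dd X^n=V^n(a_*(X^n)\cdot V^n)\dt+V^n\dd W$ with $X^n_0=U_0$.
\item This is exactly \eqref{eq:Uequation}, so the transformed pair is automatically a bounded solution with whatever terminal value results.
\item Since $v_j^n$ is frozen on each grid interval, $X^n$ moves there along an $\FF_{t_j^n}$-measurable line. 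Intersecting these lines with a larger rectangle and stopping at the first exit $\tau_n$ from $\operatorname{int}K$ gives a piecewise segment solution.
\end{itemize}
This also removes the ``drift mismatch'' you single out as the main obstacle. The approximant carries its own exact drift, and comparison with $U$ is a pathwise Gronwall bound with factor $\exp\big(L\int_0^T|V^n_t|^2\dt\big)$.

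That factor is only tight, not bounded. Consequently your additive one-step inequality $\E|U^n_{\sigma_{j+1}}-U_{\sigma_{j+1}}|^2\le\E|U^n_{\sigma_j}-U_{\sigma_j}|^2+(\text{transverse error})$ is not justified: the state-dependent drift produces terms of size $L|U^n-U|^2|V|^2$. The paper therefore proceeds in three steps. It first obtains $X^n\to U$ uniformly in probability, then deduces $\PP(\tau_n<T)\to0$, and only afterwards upgrades to $\Sp^p\times\Hp^p$ using the uniform moment bounds of Proposition~\ref{prop:bounds}. With deterministic grids, your $\delta$-exit stopping times and the cap $N$ are unnecessary.
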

\begin{proof}
Let $(U,V)$ be defined from $(Y,Z)$ by \eqref{eq:coordinates}. Extend $a_\theta$ from a neighborhood of $K$ to a bounded globally Lipschitz vector field $a_*$ on $\R^2$. Choose bounded simple predictable processes on finite deterministic grids,
\[
 V_t^n=\sum_{j=0}^{m_n-1}v_j^n\ind_{(t_j^n,t_{j+1}^n]}(t),
 \qquad v_j^n\text{ is }\FF_{t_j^n}\text{-measurable},\qquad
 \norm{V^n-V}_{\Hp^2}\longrightarrow0.
\]
Solve the forward SDE
\[
 \dd X^n=V^n\big(a_*(X^n)\cdot V^n\big)\dt+V^n\dd W,
 \qquad X_0^n=U_0.
\]
For each $n$, its coefficients are globally Lipschitz in the state with a deterministic Lipschitz constant, so it has a unique strong solution.

Let $C,L$ be a bound and a Lipschitz constant for $a_*$. Gronwall's inequality applied pathwise gives
\begin{align*}
 \sup_{t\le T}|X_t^n-U_t|
 &\le\left\{\sup_{t\le T}\left|\int_0^t(V^n-V)\dd W\right|
       +C\int_0^T(|V^n|+|V|)|V^n-V|\dt\right\}\\
 &\hspace{25mm}\times\exp\left(L\int_0^T|V^n|^2\dt\right).
\end{align*}
The term in braces tends to zero in probability by the martingale maximal inequality and Cauchy--Schwarz. The random variables $\int_0^T|V_t^n|^2\dt$ are bounded in $L^1$, so their exponentials are tight. It follows that $X^n\to U$ uniformly in probability.

Let $\tau_n$ be the first exit of $X^n$ from $\operatorname{int}K$, truncated at $T$, and define
\[
 \widetilde U_t^n=X^n_{t\wedge\tau_n},\qquad
 \widetilde V_t^n=\ind_{\{t\le\tau_n\}}V_t^n.
\]
Because $\dist(K_0,\partial K)>0$, we have $\PP(\tau_n<T)\to0$. The stopped pair solves \eqref{eq:Uequation}, stays in $K$, and on each grid interval moves along a line whose direction is known at the interval's beginning. To obtain the segments in Definition~\ref{def:core}, choose a rectangle in $(0,\infty)^2$ containing $K$ in its interior and intersect each such line with that rectangle. The lengths of these segments have deterministic upper and positive lower bounds. Their endpoints are measurable functions of the starting state and direction. Use the stopping times $t_j^n\wedge\tau_n$, followed by $T$; after stopping, use a fixed direction through the stopped value. When $v_j^n=0$, make the same choice. Thus the stopped solution is a piecewise segment solution.

Transforming back by $Y_i^n=-\alpha_i^{-1}\log\widetilde U_i^n$ gives bounded solutions $(Y^n,Z^n)$ with a common bound on $Y^n$, and $Y^n\to Y$ in $\Sp^p$ for every $1\le p<\infty$. Also,
\begin{align*}
 \E\int_0^T|\widetilde V_t^n-V_t|^2\dt
 &\le2\norm{V^n-V}_{\Hp^2}^2
 +2\E\left[\ind_{\{\tau_n<T\}}\int_0^T|V_t|^2\dt\right]
 \longrightarrow0.
\end{align*}
The functions $u\mapsto1/u_i$ are bounded and Lipschitz on $K$, hence $Z^n\to Z$ in $\Hp^2$. Proposition~\ref{prop:bounds} gives a uniform bound on $\norm{Z^n}_{\Hp^p}$ for each $1\le p<\infty$. Uniform integrability then gives convergence in $\Hp^p$ for every $1\le p<\infty$.
\end{proof}

\begin{proposition}[Density in $X_M$]\label{prop:coredense}
For every $\theta\in(0,\infty)^2$ and $M>0$, the set $\mathcal D_M^\theta$ is dense in $X_M$.
\end{proposition}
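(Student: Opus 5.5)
The plan is a three-step approximation: shrink the bound slightly, approximate by cylinder data using Corollary~\ref{cor:cylinderdensity}, then apply Lemma~\ref{lem:forward}, leaving a margin so that the piecewise segment solutions still have terminal values in $X_M$. Fix $\xi\in X_M$ and $\varepsilon>0$. The one real difficulty is that Lemma~\ref{lem:forward} only keeps the exponential transforms of the approximating solutions inside a \emph{larger} compact convex set $K\supset K_0$. Their terminal values could therefore leave $X_M$ if we approximated solutions with terminal data of size exactly $M$. To leave room, I first shrink the datum. For $\delta\in(0,1)$ put $\xi^\delta=(1-\delta)\xi$. Then $\xi^\delta\in X_{M'}$ with $M'=(1-\delta)M<M$, and $d(\xi^\delta,\xi)\le\delta\norm{\xi}_{L^2}\le\delta\sqrt2\,M\sqrt{\PP}$-trivially small. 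I fix $\delta$ with $d(\xi^\delta,\xi)<\varepsilon/3$.

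Next, I apply Corollary~\ref{cor:cylinderdensity} in $X_{M'}$ to get a cylinder datum $\eta\in X_{M'}$ with a bounded solution $(Y,Z)$ and $d(\eta,\xi^\delta)<\varepsilon/3$. By Proposition~\ref{prop:bounds}, $|Y_{t,i}|\le M'$ for all $t$. Hence the exponential transform $U$ of \eqref{eq:coordinates} stays in the deterministic compact rectangle
\[
K_0=[e^{-\alpha M'},e^{\alpha M'}]\times[e^{-\beta M'},e^{\beta M'}].
\]
I take
\[
K=[e^{-\alpha M},e^{\alpha M}]\times[e^{-\beta M},e^{\beta M}].
\]
This $K$ is compact, convex, contained in the positive quadrant, and contains $K_0$ in its interior because $M'<M$.

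Lemma~\ref{lem:forward} then supplies piecewise segment solutions $(Y^n,Z^n)$ whose transforms stay in $K$ and with $\norm{Y^n-Y}_{\Sp^2}\to0$. Since $\widetilde U^n\in K$, the inversion $Y^n_i=-\alpha_i^{-1}\log\widetilde U^n_i$ gives $|Y^n_{t,i}|\le M$. Therefore $Y^n_T\in X_M$, that is, $Y^n_T\in\mathcal D_M^\theta$. Moreover $d(Y^n_T,\eta)\le\norm{Y^n-Y}_{\Sp^2}<\varepsilon/3$ for $n$ large. The triangle inequality gives $d(Y^n_T,\xi)<\varepsilon$, which proves density.

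The only point needing care is the margin argument, namely that the a priori bound of Proposition~\ref{prop:bounds} keeps $U$ strictly inside $K$. This is exactly why the bound is shrunk before invoking Lemma~\ref{lem:forward}. The remaining steps are direct citations of the earlier results.
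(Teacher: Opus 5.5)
Your proof is correct and follows the paper's argument: shrink to $(1-\delta)\xi$, approximate by cylinder data from Corollary~\ref{cor:cylinderdensity} with bound $(1-\delta)M$, and apply Lemma~\ref{lem:forward} with $K=\prod_{i=1}^2[e^{-\alpha_iM},e^{\alpha_iM}]$, which contains the smaller rectangle in its interior and keeps the terminal values in $X_M$, hence in $\mathcal D_M^\theta$. Your $\varepsilon/3$ bookkeeping replaces the paper's diagonal choice, and the garbled bound on $d(\xi^\delta,\xi)$ should simply read $d(\xi^\delta,\xi)\le\delta\sqrt2\,M$.
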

\begin{proof}
For $0<\delta<1$, approximate $(1-\delta)\xi$ by the cylinder data of Corollary~\ref{cor:cylinderdensity}, with component bound $(1-\delta)M$. The exponential transforms of these solutions take values in
\[
 K_\delta=\prod_{i=1}^2[e^{-\alpha_i(1-\delta)M},e^{\alpha_i(1-\delta)M}],
\]
which is contained in the interior of
\[
 K=\prod_{i=1}^2[e^{-\alpha_iM},e^{\alpha_iM}].
\]
Apply Lemma~\ref{lem:forward} with this $K$ to each cylinder solution. The resulting terminal data belong to $\mathcal D_M^\theta$. A diagonal choice, followed by $\delta\downarrow0$, gives the claim.
\end{proof}

\section{Stability at piecewise segment solutions}\label{sec:stability}

The key estimate compares an arbitrary bounded solution with one fixed piecewise segment solution. Each change of measure below is defined by a scalar stochastic exponential.

\begin{theorem}[Stability at piecewise segment solutions]\label{thm:corestability}
Let $(\bar Y,\bar Z)$ be a piecewise segment solution with terminal value $\bar\xi\in X_M$. Let $\xi_n\in X_M$ converge to $\bar\xi$ in probability, and choose $(Y^n,Z^n)\in\Sol_\theta(\xi_n)$. Then
\begin{equation}\label{eq:coreconvergence}
 \norm{Y^n-\bar Y}_{\Sp^p}+\norm{Z^n-\bar Z}_{\Hp^p}\longrightarrow0
 \qquad(1\le p<\infty).
\end{equation}
Consequently, $\Sol_\theta(\bar\xi)=\{(\bar Y,\bar Z)\}$.
\end{theorem}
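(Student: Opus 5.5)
The argument runs by backward induction over the segment intervals $[\sigma_j,\sigma_{j+1}]$ of the fixed piecewise segment solution $(\bar Y,\bar Z)$, working in the exponential coordinates $U=e^{-\alpha_\cdot Y_\cdot}$. Write $\bar U$ for the transform of the reference solution and $U^n$ for that of $(Y^n,Z^n)$. Let $Q_n$ be the Girsanov measure \eqref{eq:Q} built from $b^n=Z^n_1+Z^n_2$; under $Q_n$, $U^n$ is a martingale. By Lemma~\ref{lem:density} the densities $D_n$ satisfy \eqref{eq:densitymoments} uniformly in $n$. Hence, by \eqref{eq:PtoQ}--\eqref{eq:QtoP}, convergence to zero in $L^p(\PP)$ for all $p<\infty$ and in $L^p(Q_n)$ for all $p<\infty$ are equivalent for uniformly bounded quantities. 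All $U^n,\bar U$ lie in the fixed compact rectangle $K=\prod[e^{-\alpha_iM},e^{\alpha_iM}]$.

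\emph{Inductive claim.} I would prove, for $j=N,N-1,\dots,0$, that
\[
\E\sup_{t\in[\sigma_j,T]}|U^n_t-\bar U_t|^p\to0
\]
for every $p$. At $j=N$ this is just $U^n_T\to\bar U_T$ in probability with a uniform bound. For the step from $j+1$ to $j$, fix the interval $[\sigma,\tau]=[\sigma_j,\sigma_{j+1}]$ with $\FF_\sigma$-measurable $(A,e)$.

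\emph{Auxiliary martingale.} Define
\[
M^n_t=\E^{Q_n}[\pi(U^n_\tau)\mid\FF_t],\qquad \sigma\le t\le\tau,
\]
where $\pi$ is the nearest-point projection onto the $\FF_\sigma$-measurable segment $A+e[0,1]$ (it is $1$-Lipschitz). By the convexity remark after \eqref{eq:conditionalU}, $M^n$ stays on the segment. Since $M^n$ and $U^n$ are both $Q_n$-martingales, Doob's inequality under $Q_n$ gives
\[
\E^{Q_n}\sup_{[\sigma,\tau]}|U^n-M^n|^p\le C\,\E^{Q_n}|U^n_\tau-\pi(U^n_\tau)|^p\le C\,\E^{Q_n}|U^n_\tau-\bar U_\tau|^p .
\]
The last bound uses $\bar U_\tau$ on the segment and $|u-\pi(u)|\le|u-\bar U_\tau|$. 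The right side tends to zero by the induction hypothesis and the transfer inequalities. It therefore remains to compare $M^n$ with $\bar U$, and both live on the same segment.

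\emph{Comparison on the segment.} Write $M^n=A+eR^n$. Under $Q_n$, $M^n$ is a martingale; under $\PP$ it acquires the drift $-\,dM^n\cdot$ (the $b^n$ term). By contrast, $\bar R$ satisfies $d\bar R=k(\bar R)\bar H^2dt+\bar H dW$ and $\gamma(\bar R)$ is a $\PP$-martingale. The scalar change of variables is $P^n=\gamma_{A,e}(R^n)$; this is where the uniform bounds on $\gamma',1/\gamma',\gamma''$ from Lemma~\ref{lem:segment} enter. Under $\PP$, $P^n$ has a drift. I expect that drift to be controlled by the discrepancy between $M^n$ and $U^n$: the $\PP$-drift of $U^n$ in direction $e$ equals $V^n(a\cdot V^n)$ evaluated at $U^n$ rather than at $M^n$, and the quadratic variations of $M^n$ and $U^n$ are close.

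The intended comparison compares $\gamma(R^n)$ with the $\PP$-martingale $\E[\gamma(\pi^{-1}U^n_\tau)\mid\FF_t]$. The leftover terms are bounded by $\int|a(U^n)-a(M^n)||V^n|^2$ plus terms $\int|\tilde V^n-V^n||V^n|$, which go to zero in $L^p$ by Hölder, uniform BMO/moment bounds \eqref{eq:allmoments}, and the $Q_n$-estimate above. Then $\gamma(\bar R)-\gamma(R^n)$ is controlled by Doob's inequality under $\PP$ plus these errors, and since $\gamma^{-1}$ is Lipschitz, $\sup|\bar U-M^n|\to0$ in $L^p(\PP)$. This completes the induction step.

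\emph{Main obstacle.} The hardest step is this third one: controlling the drift created because $M^n$ is a martingale under $Q_n$ rather than $\PP$, and the mismatch between the $\PP$-dynamics of $M^n$ and of the segment diffusion. I would first try writing $dM^n=\tilde V^n dW^{Q_n}$, with $\tilde V^n$ the martingale-representation integrand of $M^n$ under $Q_n$. Then $\E^{Q_n}\int|\tilde V^n-V^n|^2\to0$ follows from the $L^2(Q_n)$ convergence via the BDG inequality. Moments of $\int|\tilde V^n|^2$ are uniform since $M^n$ is bounded and $Q_n$-BMO norms are comparable (Kazamaki).

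\emph{Conclusion.} Once $U^n\to\bar U$ in $\Sp^p$, $Y^n\to\bar Y$ in $\Sp^p$ because $\log$ is Lipschitz on $K$. For $Z$, apply Itô to $|U^n-\bar U|^2$ under $\PP$: the stochastic integrals give $\E\int|V^n-\bar V|^2\le C\E\sup|U^n-\bar U|^2+C\E\int|U^n-\bar U|(|V^n|^2+|\bar V|^2)$, which tends to zero by Cauchy--Schwarz and \eqref{eq:allmoments}. Then $\Hp^p$ convergence follows by uniform integrability. Uniqueness follows by taking $\xi_n=\bar\xi$ and any $(Y^n,Z^n)\in\Sol_\theta(\bar\xi)$.
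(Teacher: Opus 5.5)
Your proposal is correct and follows essentially the paper's argument: backward induction over the reference partition, the Girsanov measure $Q_n$ under which $U^n$ is a martingale, an auxiliary $Q_n$-martingale confined to the $\FF_\sigma$-measurable reference segment, and the scalar transform $\gamma_{A,e}$, whose $\PP$-drift is $-\varepsilon_nB_n$ with $\varepsilon_n\to0$ in $\Hp^p$. The differences are minor: the paper uses $\E^{Q_n}[\bar U_\tau\mid\FF_t]$ instead of $\E^{Q_n}[\pi(U^n_\tau)\mid\FF_t]$, so that $P_{n,\tau}=\bar P_\tau$ holds exactly, and it recovers $Z$ on each interval from $B_n\to\bar B$ rather than through your single global It\^o estimate on $|U^n-\bar U|^2$; you should also state explicitly that the deterministic lower bound on $|e|$ from Definition~\ref{def:core} is what bounds $h_n$ and the terminal coordinate difference $|R^n_\tau-\bar R_\tau|$.
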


\begin{proof}
Apply the transformation \eqref{eq:coordinates} to obtain $(U^n,V^n)$ and $(\bar U,\bar V)$. The processes $U^n$ and $\bar U$ lie in a common compact rectangle in $(0,\infty)^2$. Proposition~\ref{prop:bounds} bounds $\norm{V^n}_{\Hp^p}$ uniformly in $n$ for every $1\le p<\infty$. Lemma~\ref{lem:density} gives the bounds \eqref{eq:densitymoments} for
\[
 b_n=Z_1^n+Z_2^n,\qquad D_n=\cE(b_n\cdot W)_T,
 \qquad \dd Q_n=D_n\dd\PP.
\]
We prove convergence on an interval $[\sigma,\tau]$ of the partition for $(\bar Y,\bar Z)$, under the assumption
\begin{equation}\label{eq:endpointconv}
 U_\tau^n\longrightarrow\bar U_\tau\quad\text{in probability}.
\end{equation}
Write the reference segment as $A+e[0,1]$, with $A,e$ measurable with respect to $\FF_\sigma$. Let $c\le|e|\le C$ be its deterministic length bounds.

\emph{An auxiliary martingale.}
On $[\sigma,\tau]$, define
\begin{equation}\label{eq:replacement}
 \widehat U_t^n=\E^{Q_n}[\bar U_\tau\mid\FF_t],
 \qquad \dd\widehat U_t^n=\widehat V_t^n\dd W_t^{Q_n}.
\end{equation}
The terminal difference $U_\tau^n-\bar U_\tau$ is uniformly bounded and converges in probability. Equation \eqref{eq:PtoQ} gives its convergence to zero in $L^p(Q_n)$ for every $1\le p<\infty$. Applying the martingale maximal and Burkholder--Davis--Gundy inequalities to its conditional expectation gives convergence of $U^n-\widehat U^n$ and $V^n-\widehat V^n$ under $Q_n$. Equation \eqref{eq:QtoP}, with sufficiently high moment orders, then yields
\begin{equation}\label{eq:replacementconvergence}
 \norm{U^n-\widehat U^n}_{\Sp^p[\sigma,\tau]}
 +\norm{V^n-\widehat V^n}_{\Hp^p[\sigma,\tau]}
 \longrightarrow0\qquad(1\le p<\infty).
\end{equation}
These martingale estimates apply to the processes stopped at $\tau$ and restricted to $[\sigma,\tau]$. In particular, they have constants independent of the stopping times. They also give uniform bounds on $\norm{\widehat V^n}_{\Hp^p}$ for every $1\le p<\infty$.

\emph{Reduction to the reference segment.}
Since conditional expectations preserve the $\FF_\sigma$-measurable segment, \eqref{eq:replacement} gives
\[
 \widehat U^n=A+e\rho_n,\qquad
 \widehat V^n=eh_n,\qquad 0\le\rho_n\le1.
\]
Similarly, write $\bar U=A+e\bar\rho$ and $\bar V=e\bar h$. Set
\[
 \varepsilon_n=b_n+a_\theta(\widehat U^n)\cdot\widehat V^n.
\]
Since $b_n+a_\theta(U^n)\cdot V^n=0$, the error is
\[
 \varepsilon_n
 =[a_\theta(\widehat U^n)-a_\theta(U^n)]\cdot V^n
   +a_\theta(\widehat U^n)\cdot(\widehat V^n-V^n).
\]
The coefficient $a_\theta$ is bounded and Lipschitz on the common rectangle. H\"older's inequality, \eqref{eq:replacementconvergence}, and the uniform bounds on $\norm{V^n}_{\Hp^{2p}}$ imply
\[
 \norm{\varepsilon_n}_{\Hp^p[\sigma,\tau]}\longrightarrow0
 \qquad(1\le p<\infty).
\]
For example, the first term is bounded in $\Hp^p$ by a constant times
\[
 \norm{\widehat U^n-U^n}_{\Sp^{2p}[\sigma,\tau]}
 \norm{V^n}_{\Hp^{2p}[\sigma,\tau]}.
\]

Let $\gamma=\gamma_{A,e}$ be \eqref{eq:gamma} and put
\[
 P_n=\gamma(\rho_n),\qquad B_n=\gamma'(\rho_n)h_n,
 \qquad \bar P=\gamma(\bar\rho),\qquad
 \bar B=\gamma'(\bar\rho)\bar h.
\]
By \eqref{eq:replacement}, in the original probability measure
\[
 \dd\widehat U^n
 =\widehat V^n\big(a_\theta(\widehat U^n)\cdot\widehat V^n\big)\dt
 -\varepsilon_n\widehat V^n\dt+\widehat V^n\dd W.
\]
It\^o's formula and $\gamma''=-2\gamma' a_\theta(A+e\rho)\cdot e$ therefore give
\begin{equation}\label{eq:scalarerror}
 \dd P_n=-\varepsilon_nB_n\dt+B_n\dd W,
 \qquad \dd\bar P=\bar B\dd W,
 \qquad P_{n,\tau}=\bar P_\tau.
\end{equation}
The lower bound on $|e|$ and the bound on $\gamma'$ give a uniform bound on $\norm{B_n}_{\Hp^p}$ for every $1\le p<\infty$. Consequently,
\begin{equation}\label{eq:forcing0}
 \left\|\int_\sigma^\tau|\varepsilon_nB_n|\dt\right\|_{L^p}
 \le\norm{\varepsilon_n}_{\Hp^{2p}[\sigma,\tau]}
      \norm{B_n}_{\Hp^{2p}[\sigma,\tau]}
 \longrightarrow0.
\end{equation}
Conditional expectation in \eqref{eq:scalarerror} and Doob's inequality give $P_n\to\bar P$ in $\Sp^p$ for $p>1$, and hence also for $p=1$. Applying It\^o's formula to $(P_n-\bar P)^2$ gives
\[
 \E\int_\sigma^\tau|B_n-\bar B|^2\dt
 \le2\E\left[\sup_{[\sigma,\tau]}|P_n-\bar P|
                  \int_\sigma^\tau|\varepsilon_nB_n|\dt\right]
 \longrightarrow0.
\]
The process $\bar P$ is a bounded martingale, so $\bar B$ also belongs to $\Hp^q[\sigma,\tau]$ for every $1\le q<\infty$. Together with the uniform bounds on $\norm{B_n}_{\Hp^q}$ for $q>p$, this gives uniform integrability and hence convergence in $\Hp^p$ for every $1\le p<\infty$. The inverse $\gamma^{-1}$ is uniformly Lipschitz, and $1/\gamma'$ is uniformly Lipschitz on $[0,1]$. It follows that $\widehat U^n\to\bar U$ and $\widehat V^n\to\bar V$ in $\Sp^p$ and $\Hp^p$, respectively, for every $1\le p<\infty$. Combining this with \eqref{eq:replacementconvergence} establishes the same conclusion for $(U^n,V^n)$ on $[\sigma,\tau]$.

\emph{Backward induction.}
On the final reference interval, \eqref{eq:endpointconv} follows from $\xi_n\to\bar\xi$. Convergence at the left endpoint supplies the terminal convergence needed on the preceding interval. A finite backward induction proves convergence on $[0,T]$. Returning to $Y_i=-\alpha_i^{-1}\log U_i$ and $Z_i=-V_i/(\alpha_iU_i)$ gives \eqref{eq:coreconvergence}. Finally, applying the convergence statement to a constant sequence of any bounded solution with terminal value $\bar\xi$ proves uniqueness.
\end{proof}

\begin{corollary}[Local diameter estimate]\label{cor:smalldiameter}
For every $\bar\xi\in\mathcal D_M^\theta$ and every $\epsilon>0$, there exists $r>0$ such that
\begin{equation}\label{eq:localdiameter}
 \diam_{\mathscr E}\bigcup_{\eta\in B_{X_M}(\bar\xi,r)}\Sol_\theta(\eta)<\epsilon.
\end{equation}
\end{corollary}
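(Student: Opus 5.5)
The plan is to argue by contradiction from Theorem~\ref{thm:corestability}, using the triangle inequality through the unique reference solution. Fix $\bar\xi\in\mathcal D_M^\theta$ and let $(\bar Y,\bar Z)$ be the piecewise segment solution with terminal value $\bar\xi$. By Theorem~\ref{thm:corestability}, $\Sol_\theta(\bar\xi)=\{(\bar Y,\bar Z)\}$. It therefore suffices to find $r>0$ such that
\[
 \norm{Y-\bar Y}_{\Sp^2}+\norm{Z-\bar Z}_{\Hp^2}<\epsilon/2
 \quad\text{for all } (Y,Z)\in\Sol_\theta(\eta),\ \eta\in B_{X_M}(\bar\xi,r).
\]
Once this holds, any two elements of the union are within $\epsilon/2$ of $(\bar Y,\bar Z)$ in $\mathscr E$. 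Their distance is then strictly less than $\epsilon$, and indeed the diameter is at most $\epsilon/2\cdot 2$. To get strict inequality for the supremum, I would run the argument with $\epsilon/3$ in place of $\epsilon/2$.

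Suppose no such $r$ exists. Then for each $n$ there is $\eta_n\in X_M$ with $d(\eta_n,\bar\xi)<1/n$, together with a solution $(Y^n,Z^n)\in\Sol_\theta(\eta_n)$, such that
\[
 \norm{Y^n-\bar Y}_{\Sp^2}+\norm{Z^n-\bar Z}_{\Hp^2}\ge\epsilon/3.
\]
Convergence in $L^2$ within $X_M$ implies convergence in probability, so $\eta_n\to\bar\xi$ in probability. Theorem~\ref{thm:corestability} applies to an arbitrary choice of solutions along such a sequence. With $p=2$, it gives convergence of $(Y^n,Z^n)$ to $(\bar Y,\bar Z)$ in $\mathscr E$. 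This contradicts the lower bound $\epsilon/3$.

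The main point is that Theorem~\ref{thm:corestability} allows arbitrary selections $(Y^n,Z^n)\in\Sol_\theta(\xi_n)$, which is exactly what makes the contradiction argument uniform over the whole union. Two degenerate cases need a remark. If some $\Sol_\theta(\eta)$ are empty, they contribute nothing to the union. The union itself contains $(\bar Y,\bar Z)$, so it is nonempty and the diameter is a well-defined finite quantity. No further obstacle is expected.
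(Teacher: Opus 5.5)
Your proposal is correct and matches the paper's proof: both argue by contradiction from Theorem~\ref{thm:corestability}, using that it allows arbitrary selections $(Y^n,Z^n)\in\Sol_\theta(\eta_n)$, and then place every solution in the union within $\epsilon/3$ of the unique reference solution and conclude with the triangle inequality. Your switch from $\epsilon/2$ to $\epsilon/3$ to get strict inequality for the diameter is exactly the margin the paper uses.
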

\begin{proof}
Let $\bar u$ be the unique solution at $\bar\xi$. If every neighborhood of $\bar\xi$ contained a terminal datum admitting a solution at distance at least $\epsilon/3$ from $\bar u$, we could choose $\eta_n\in B_{X_M}(\bar\xi,1/n)$ and $u_n\in\Sol_\theta(\eta_n)$ such that $\norm{u_n-\bar u}_{\mathscr E}\ge\epsilon/3$. This contradicts Theorem~\ref{thm:corestability}. Thus all solutions with terminal values in some neighborhood lie within $\epsilon/3$ of $\bar u$, and the triangle inequality proves \eqref{eq:localdiameter}.
\end{proof}

\section{Generic existence, uniqueness, and stability}\label{sec:baire}

We first state the topological argument for a set-valued map.

\begin{lemma}[Baire category lemma]\label{lem:abstract}
Let $X,E$ be complete metric spaces and let $F:X\rightrightarrows E$ have closed graph. Suppose there is a dense set $D\subset X$ such that:
\begin{enumerate}
\item $F(x)=\{u_x\}$ for every $x\in D$;
\item whenever $x_n\to x\in D$ and $u_n\in F(x_n)$, one has $u_n\to u_x$ in $E$.
\end{enumerate}
For $A\subset X$ write $F(A)=\bigcup_{x\in A}F(x)$, and define
\begin{equation}\label{eq:abstractopen}
 O_j=\{x\in X:\text{there exists }r>0\text{ with }
                       \diam_E F(B_X(x,r))<1/j\}.
\end{equation}
Then $G=\bigcap_{j\ge1}O_j$ is a dense $G_\delta$ set containing $D$. For each $x\in G$, the set $F(x)$ is a singleton $\{u_x\}$, and $x_n\to x$, $u_n\in F(x_n)$ imply $u_n\to u_x$.
\end{lemma}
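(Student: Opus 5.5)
We prove the lemma in four steps: each $O_j$ is open, each $O_j$ contains $D$, the intersection $G$ is therefore a dense $G_\delta$, and on $G$ the conclusions follow from closedness of the graph together with a Cauchy argument. Throughout we use the convention $\diam\emptyset=0$. This allows points whose neighborhood carries no solutions to lie in $O_j$; the existence claim on $G$ must then come from somewhere else.

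\emph{Openness.} Let $x\in O_j$, with $r>0$ such that $\diam_E F(B_X(x,r))<1/j$. For $y\in B_X(x,r)$, put $r'=r-d(x,y)>0$. Then $B_X(y,r')\subset B_X(x,r)$, so $\diam_E F(B_X(y,r'))<1/j$ and $y\in O_j$.

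\emph{Containment of $D$.} Let $x\in D$ and suppose, for contradiction, that $x\notin O_j$. Then for each $n$ there are points $x_n,x_n'\in B_X(x,1/n)$ and elements $u_n\in F(x_n)$, $u_n'\in F(x_n')$ with $d_E(u_n,u_n')\ge 1/(2j)$. Property (ii) gives $u_n\to u_x$ and $u_n'\to u_x$, which is a contradiction. Hence $D\subset O_j$ for every $j$. Each $O_j$ is then open and dense, and Baire's theorem in the complete space $X$ shows that $G$ is a dense $G_\delta$ containing $D$.

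\emph{Properties on $G$.} This is the main point, because existence of $u_x$ is not automatic. Fix $x\in G$. For each $j$ choose $r_j>0$ with $\diam F(B(x,r_j))<1/j$. Since $D$ is dense, pick $x_j\in D\cap B(x,r_j)$ with $x_j\to x$, and set $u_j:=u_{x_j}$.

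We claim $(u_j)$ is Cauchy. Take $k\ge j$ and shrink the radii so that $r_k\le r_j$. Then both $u_j$ and $u_k$ lie in $F(B(x,r_j))$, so $d(u_j,u_k)<1/j$. By completeness of $E$, $u_j\to u$ for some $u\in E$. Since $(x_j,u_j)\to(x,u)$ and the graph is closed, $u\in F(x)$.

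For any $v\in F(x)$, both $v$ and $u_k$ lie in $F(B(x,r_j))$ for every $k\ge j$. Hence $d(v,u)\le 1/j$ for all $j$, so $v=u$ and $F(x)=\{u\}=:\{u_x\}$.

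Finally, let $x_n\to x$ and $u_n\in F(x_n)$. Given $j$, for large $n$ we have $x_n\in B(x,r_j)$. Then $u_n$ and $u_x$ both lie in $F(B(x,r_j))$, so $d(u_n,u_x)<1/j$. This gives $u_n\to u_x$.
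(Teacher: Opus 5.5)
Your proof is correct and follows the paper's argument essentially step by step: openness by nested balls, $D\subset O_j$ via (ii), Baire's theorem, then on $G$ a Cauchy sequence $u_{x_j}$ with $x_j\in D$, closedness of the graph for existence, and the small-diameter balls for uniqueness and stability. Two cosmetic points remain. First, fix nonincreasing radii with $r_j\le 1/j$ \emph{before} picking $x_j$, so that $x_j\to x$ and $x_k\in B(x,r_j)$ for $k\ge j$. Second, your caveat about $\diam\emptyset=0$ is vacuous: every ball meets $D$ and therefore has nonempty image under $F$, as the paper notes.
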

\begin{proof}
Every open ball meets $D$, so its image under $F$ is nonempty. If $\diam_E F(B_X(x,r))<1/j$ and $y\in B_X(x,r/2)$, then $B_X(y,r/2)\subset B_X(x,r)$, so $y\in O_j$. Thus $O_j$ is open. Assumption (ii) implies that, near any $x\in D$, all values of $F$ are within $1/(3j)$ of $u_x$. Hence $D\subset O_j$, and $O_j$ is dense. Baire's theorem shows that $G$ is a dense $G_\delta$ set containing $D$.

Fix $x\in G$ and choose $x_n\in D$ with $x_n\to x$. For each $j$, the sequence is eventually in a ball from \eqref{eq:abstractopen}. Thus $(u_{x_n})$ is Cauchy in $E$. Completeness gives a limit $u$, and closedness of the graph gives $u\in F(x)$.

Any $v\in F(x)$ belongs to the image of every such ball. Letting $n\to\infty$ shows $d_E(v,u)\le1/j$ for every $j$, hence $v=u$. Finally, if $x_n\to x$ and $u_n\in F(x_n)$, both $u_n$ and $u$ belong eventually to the same image in \eqref{eq:abstractopen}. This proves convergence.
\end{proof}

\begin{proof}[Proof of Theorem~\ref{thm:main}]
Apply Lemma~\ref{lem:abstract} with
\[
 X=X_M,\qquad E=\mathscr E=\Sp^2\times\Hp^2,
 \qquad F(\xi)=\Sol_\theta(\xi),\qquad D=\mathcal D_M^\theta.
\]
The graph is closed by Lemma~\ref{lem:closed}, $\mathcal D_M^\theta$ is dense by Proposition~\ref{prop:coredense}, and uniqueness and stability on this set follow from Theorem~\ref{thm:corestability}. In particular, we may take
\begin{equation}\label{eq:canonicalG}
 \mathcal G_M^\theta
 =\bigcap_{j=1}^\infty
 \left\{\xi\in X_M:\exists r>0,\quad
 \diam_{\mathscr E}\bigcup_{\eta\in B_{X_M}(\xi,r)}\Sol_\theta(\eta)<1/j\right\}.
\end{equation}
This proves existence, uniqueness, and stability in $\mathscr E$, together with inclusion of $\mathcal D_M^\theta$.

Proposition~\ref{prop:bounds} gives \eqref{eq:mainbounds} and bounds $\norm{Z^n}_{\Hp^p}$ uniformly in $n$ for each $1\le p<\infty$. Uniform integrability gives convergence in $\Hp^p$ for every $1\le p<\infty$. The uniform bound on $Y$ gives convergence in $\Sp^p$. By Lemma~\ref{lem:segment}, every two-valued terminal datum admits a solution whose exponential transform lies on a single segment. This proves the final inclusion.
\end{proof}

\begin{corollary}[Continuous solution map]\label{cor:solutionmap}
For every $1\le p<\infty$, the map
\[
 \mathcal G_M^\theta\longrightarrow\Sp^p\times\Hp^p,
 \qquad \xi\longmapsto(Y^\xi,Z^\xi)
\]
is continuous. For each $\xi\in\mathcal G_M^\theta$, any sequence of terminal data in $\mathcal D_M^\theta$ converging to $\xi$ produces solutions converging to $(Y^\xi,Z^\xi)$ in these spaces.
\end{corollary}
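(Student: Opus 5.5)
The plan is to read both assertions off Theorem~\ref{thm:main}(ii) by using constant solution sequences, which are admissible there.

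\emph{Continuity.} Fix $1\le p<\infty$ and let $\xi_n\in\mathcal G_M^\theta$ converge to $\xi\in\mathcal G_M^\theta$ in $X_M$. Convergence in $X_M$ is convergence in $L^2$, and this implies convergence in probability. For each $n$, Theorem~\ref{thm:main}(i) gives the unique solution $(Y^{\xi_n},Z^{\xi_n})\in\Sol_\theta(\xi_n)$. Theorem~\ref{thm:main}(ii) applies to arbitrary choices $(Y^n,Z^n)\in\Sol_\theta(\xi_n)$, so we may take $(Y^n,Z^n)=(Y^{\xi_n},Z^{\xi_n})$. Then \eqref{eq:mainstability} gives $\norm{Y^{\xi_n}-Y^\xi}_{\Sp^p}+\norm{Z^{\xi_n}-Z^\xi}_{\Hp^p}\to0$. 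Since $X_M$ is metric, sequential continuity is continuity. This proves the first assertion.

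\emph{Approximation from $\mathcal D_M^\theta$.} Let $\bar\xi_n\in\mathcal D_M^\theta$ with $\bar\xi_n\to\xi\in\mathcal G_M^\theta$. By Theorem~\ref{thm:corestability}, each $\bar\xi_n$ has exactly one bounded solution, namely its piecewise segment solution $(\bar Y^n,\bar Z^n)$. By Theorem~\ref{thm:main}(iii), $\mathcal D_M^\theta\subset\mathcal G_M^\theta$, so this case is already covered by the continuity statement. Alternatively, one can apply Theorem~\ref{thm:main}(ii) directly with $(Y^n,Z^n)=(\bar Y^n,\bar Z^n)$.

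\emph{Where the content lies.} No step is a real obstacle. All the substance sits in Theorem~\ref{thm:main}(ii), which came from Lemma~\ref{lem:abstract} in the $\Sp^2\times\Hp^2$ metric. The passage to $\Sp^p\times\Hp^p$ for every finite $p$ uses two facts. First, $|Y_i|\le M$ holds uniformly by Proposition~\ref{prop:bounds}. Second, the $\Hp^q$ norms are uniformly bounded by \eqref{eq:allmoments}. Together these give uniform integrability, which upgrades $\Sp^2$ and $\Hp^2$ convergence to $\Sp^p$ and $\Hp^p$ convergence. That upgrade was already carried out in the proof of Theorem~\ref{thm:main}, so nothing more is needed here.
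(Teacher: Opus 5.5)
Your proposal is correct and follows the paper's proof, which simply says that both assertions are direct applications of the stability estimate \eqref{eq:mainstability} in Theorem~\ref{thm:main}(ii), with the unique solutions taken as the arbitrary choices $(Y^n,Z^n)$. Your additional remarks are accurate but already contained in Theorem~\ref{thm:main}: uniqueness at points of $\mathcal D_M^\theta$ comes from Theorem~\ref{thm:corestability}, and the upgrade from $\Sp^2\times\Hp^2$ to $\Sp^p\times\Hp^p$ is carried out in the theorem's proof.
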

\begin{proof}
Both assertions are direct applications of \eqref{eq:mainstability}.
\end{proof}

\section{Two-valued terminal data and reduction to three values}\label{sec:finite}

We use the segment formula to prove well-posedness for two-valued terminal data and to reduce the bounded terminal problem to three-valued terminal data.

\subsection{Solutions with two terminal values}
\begin{proposition}\label{prop:binary}
For every $a,b\in\R^2$ and $E\in\FF_T$, the terminal datum
\[
 \xi=a\ind_{E^c}+b\ind_E
\]
has a unique bounded solution. If $\xi\in X_M$, then $\xi\in\mathcal D_M^\theta\subset\mathcal G_M^\theta$. More generally, the same assertions hold whenever the exponential transform of the terminal datum lies on a deterministic segment in the positive quadrant.
\end{proposition}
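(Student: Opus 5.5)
The idea is to reduce to a single application of the segment formula, Lemma~\ref{lem:segment}, on the whole interval $[0,T]$ with $\sigma=0$, $\tau=T$. Put $\hat a=(e^{-\alpha a_1},e^{-\beta a_2})$ and $\hat b=(e^{-\alpha b_1},e^{-\beta b_2})$. These are the exponential transforms of the two terminal values, and both lie in $(0,\infty)^2$.

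\emph{Nondegenerate case $a\neq b$.} Take $A=\hat a$ and $e=\hat b-\hat a\neq0$. Both are deterministic, hence $\FF_0$-measurable. The segment $A+e[0,1]$ is compact and lies in the positive quadrant, so it sits in a fixed compact rectangle. Set $R_T=\ind_E$. Then $A+eR_T$ is exactly the transformed terminal value $e^{-\alpha_\cdot\xi_\cdot}$. Lemma~\ref{lem:segment} then gives a unique bounded solution on $[0,T]$, explicitly
\[
P_t=\gamma_{A,e}(1)\PP(E\mid\FF_t)=\PP(E\mid\FF_t),\qquad U_t=A+e\gamma_{A,e}^{-1}(P_t).
\]

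For the uniqueness claim, one point needs care: the lemma is stated for bounded solutions, but it must cover \emph{every} bounded solution of \eqref{eq:bsde}. Its proof does this. Any bounded solution has $Z\in\BMO$ by Proposition~\ref{prop:bounds}. Hence by \eqref{eq:conditionalU} its transform $U$ is a $Q$-martingale, so $U$ lies in the closed convex hull of $\{\hat a,\hat b\}$, which is our segment. So uniqueness holds in the class $\Sol_\theta(\xi)$. \emph{Degenerate case $a=b$.} The constant solution $Y\equiv a$, $Z\equiv0$ works. Uniqueness again follows from \eqref{eq:conditionalU}, since $U_t$ must be the constant $\hat a$. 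Alternatively, following Definition~\ref{def:core}, one can represent this case by any segment through $\hat a$.

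\emph{Membership in $\mathcal D_M^\theta$.} The solution just built is a piecewise segment solution in the sense of Definition~\ref{def:core}, with $N=1$, $\sigma_0=0$, $\sigma_1=T$. There is a single deterministic segment, which lies in a deterministic rectangle and has positive deterministic length. In the constant case we pick a fixed direction of unit length. So if $\xi\in X_M$, then $\xi\in\mathcal D_M^\theta$. Theorem~\ref{thm:main}(iii), equivalently Lemma~\ref{lem:abstract} applied with $D=\mathcal D_M^\theta$, gives $\mathcal D_M^\theta\subset\mathcal G_M^\theta$.

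\emph{The general assertion.} Suppose the transform $e^{-\alpha_\cdot\xi_\cdot}$ lies a.s.\ on a deterministic segment $A+e[0,1]$ in the positive quadrant. Define $R_T=e\cdot(e^{-\alpha_\cdot\xi_\cdot}-A)/|e|^2\in[0,1]$, which is $\FF_T$-measurable, and apply the same argument verbatim. The only step needing any thought is the uniqueness check above: the a priori confinement of an arbitrary bounded solution's transform to the segment. That is already supplied by Proposition~\ref{prop:bounds} and \eqref{eq:conditionalU}, so I expect no real obstacle.
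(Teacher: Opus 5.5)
Your proposal is correct and follows the paper's proof. Both apply Lemma~\ref{lem:segment} once on $[0,T]$ with the deterministic segment from $\hat a$ to $\hat b$, so that $P_t=\PP(E\mid\FF_t)$; both treat the constant case via \eqref{eq:conditionalU}, handle the general case with $\E[\gamma_{A,e}(R_T)\mid\FF_t]$, and obtain membership in $\mathcal D_M^\theta$ by viewing the solution as a single-interval piecewise segment solution (in the constant case, choose the unit direction, e.g.\ $(1,0)$, so that the segment stays in the positive quadrant).
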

\begin{proof}
For $a\ne b$, put $A_i=e^{-\alpha_i a_i}$, $B_i=e^{-\alpha_i b_i}$ and $e=B-A$. Lemma~\ref{lem:segment} gives
\begin{equation}\label{eq:binaryexplicit}
 P_t=\PP(E\mid\FF_t),\qquad
 R_t=\gamma_{A,e}^{-1}(P_t),\qquad
 Y_t^i=-\alpha_i^{-1}\log(A_i+e_iR_t).
\end{equation}
If $\dd P_t=\zeta_t\dd W_t$, the corresponding process $Z$ is
\[
 Z_t^i=-\frac{e_i\zeta_t}
 {\alpha_i(A_i+e_iR_t)\gamma_{A,e}'(R_t)}.
\]
For $a=b$, the solution is constant, by \eqref{eq:conditionalU}. For a general transformed terminal value $U_T=A+eR_T$, replace $P_t$ in \eqref{eq:binaryexplicit} by $\E[\gamma_{A,e}(R_T)\mid\FF_t]$. Each resulting solution is a piecewise segment solution with a single interval.
\end{proof}

The proposition also applies to terminal data taking infinitely many values, provided their exponential transforms lie in a line segment.

\subsection{Reduction to three-valued terminal data}
Fix a right triangle contained in $(0,\infty)^2$ with vertices
\begin{equation}\label{eq:triangle}
 A=(a,c),\qquad B=(a+L,c),\qquad C=(a,c+D),
 \qquad a,c,L,D>0,
\end{equation}
and write $\Delta=\conv\{A,B,C\}$. For an interior point $u=(x,s)$, the horizontal chord through $u$ has endpoints
\[
 L_s=(a,s),\qquad R_s=(b(s),s),\qquad
 b(s)=a+L\left(1-\frac{s-c}{D}\right).
\]
Define
\begin{align}
 p(x,s)&=\frac{\displaystyle\int_a^x r^{-2/\alpha}\dd r}
                  {\displaystyle\int_a^{b(s)}r^{-2/\alpha}\dd r},
 \label{eq:pfirst}\\
 p_L(s)&=\frac{\displaystyle\int_c^s r^{-2/\beta}\dd r}
                 {\displaystyle\int_c^{c+D}r^{-2/\beta}\dd r},
 \label{eq:pleft}\\
 p_R(s)&=\frac{\displaystyle\int_0^{(s-c)/D}
       [(a+L(1-r))^{1/\alpha}(c+Dr)^{1/\beta}]^{-2}\dd r}
       {\displaystyle\int_0^1
       [(a+L(1-r))^{1/\alpha}(c+Dr)^{1/\beta}]^{-2}\dd r}.
 \label{eq:pright}
\end{align}
These are the transformed coordinates $\gamma(R)$ on $[L_s,R_s]$, $[A,C]$ and $[B,C]$, respectively. On each compact subset of $\operatorname{int}\Delta$ they are Lipschitz and are bounded away from zero and one.

Extend the Brownian motion from $[0,T]$ to $[0,3T]$ using independent future increments, with the augmented natural filtration on the enlarged interval. Let $\Phi$ be the standard normal distribution function and set
\begin{equation}\label{eq:uniforms}
 H_1=\Phi\left(\frac{W_{2T}-W_T}{\sqrt T}\right),\qquad
 H_2=\Phi\left(\frac{W_{3T}-W_{2T}}{\sqrt T}\right).
\end{equation}
These are independent uniform random variables, independent of $\FF_T$.

Given a terminal datum $\xi$ whose exponential transform $u=(x,s)$ lies in a compact subset of $\operatorname{int}\Delta$, define
\begin{equation}\label{eq:IJ}
 I=\ind_{\{H_1\le p(x,s)\}},\qquad
 J=\ind_{\{H_2\le(1-I)p_L(s)+Ip_R(s)\}}.
\end{equation}
Define the transformed terminal value $\widehat\eta$ and the corresponding terminal datum $\Theta(\xi)$ by
\begin{equation}\label{eq:encoding}
 \widehat\eta=A\ind_{\{I=0,J=0\}}
             +B\ind_{\{I=1,J=0\}}+C\ind_{\{J=1\}},
 \qquad \Theta_i(\xi)=-\alpha_i^{-1}\log\widehat\eta_i.
\end{equation}

\begin{theorem}[Reduction to three-valued terminal data]\label{thm:encoding}
Fix $\theta\in(0,\infty)^2$ and $M>0$. One can choose the triangle \eqref{eq:triangle}, depending only on $M$ and $\theta$, so that \eqref{eq:encoding} defines a map on all of $X_M$. For each $\xi\in X_M$, the terminal datum $\Theta(\xi)$ takes values in a fixed set of three deterministic vectors. The map induces a bijection
\begin{equation}\label{eq:solbijection}
 \Sol_{\theta,[0,T]}(\xi)
 \longleftrightarrow
 \Sol_{\theta,[0,3T]}(\Theta(\xi)).
\end{equation}
The bijection is extension by two explicit segment solutions, with inverse restriction to $[0,T]$. Moreover,
\begin{equation}\label{eq:amplitudegrowth}
 |\Theta_i(\xi)|\le M+
 \frac{\log(9/2)}{\min(\alpha,\beta)}=:M',
\end{equation}
and, for a constant depending only on $M$ and $\theta$,
\begin{equation}\label{eq:encodingcontinuous}
 \norm{\Theta(\xi)-\Theta(\zeta)}_{L^2}^2
 \le C\E|\xi-\zeta|,\qquad \xi,\zeta\in X_M.
\end{equation}
\end{theorem}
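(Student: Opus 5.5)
The plan is to choose the triangle so that the exponential transforms of all data in $X_M$ lie well inside $\Delta$. Then two applications of Lemma~\ref{lem:segment}, on $[2T,3T]$ and on $[T,2T]$, force the transformed solution back to $U_T=(x,s)$.

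\emph{Choice of triangle.} The transforms of $\xi\in X_M$ lie in $K=\prod_i[e^{-\alpha_iM},e^{\alpha_iM}]$. Take $a=\tfrac23e^{-\alpha M}$ and $c=\tfrac23e^{-\beta M}$, so that the left and bottom sides stay a fixed distance from $K$. Then take $L$ and $D$ of order $3e^{\alpha M}$ and $3e^{\beta M}$, so that the hypotenuse $(x-a)/L+(s-c)/D=1$ passes strictly beyond the corner $(e^{\alpha M},e^{\beta M})$. With this choice $K$ is a compact subset of $\operatorname{int}\Delta$.

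The three values of $\Theta(\xi)$ are $-\alpha_i^{-1}\log$ of the vertex coordinates. The vertex coordinates range between $\tfrac23 e^{-\alpha_iM}$ and at most about $\tfrac92 e^{\alpha_iM}$. Tuning the constants so that these two factors are $2/3$ and $9/2$ gives \eqref{eq:amplitudegrowth}. On $K$ the functions $p$, $p_L$ and $p_R$ are Lipschitz and bounded away from $0$ and $1$, so \eqref{eq:IJ} makes sense for every $\xi\in X_M$.

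\emph{Identification of the segment coordinates.} On the horizontal chord $[L_s,R_s]$ we have $G_\theta(u)^{-2}=x^{-2/\alpha}s^{-2/\beta}$, and the factor in $s$ cancels in \eqref{eq:gamma}. Hence $p(\cdot,s)$ is exactly $\gamma_{L_s,R_s-L_s}$ written in the first coordinate. In the same way, $p_L$ and $p_R$ are the $\gamma$ functions of the sides $[A,C]$ and $[B,C]$, parametrized by height.

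\emph{Step on $[2T,3T]$.} Let $(Y,Z)$ be any bounded solution on $[0,3T]$ with terminal value $\Theta(\xi)$. The variable $I$ is $\FF_{2T}$-measurable, so the terminal transform $\widehat\eta$ lies on the $\FF_{2T}$-measurable segment $[A,C]$ if $I=0$, or $[B,C]$ if $I=1$. Its segment coordinate is $J$. By Lemma~\ref{lem:segment} the solution on $[2T,3T]$ is unique, with $P_{2T}=\PP(J=1\mid\FF_{2T})$. Since $H_2$ is independent of $\FF_{2T}$, this equals $(1-I)p_L(s)+Ip_R(s)$. Inverting $\gamma$, we get $U_{2T}=L_s$ on $\{I=0\}$ and $U_{2T}=R_s$ on $\{I=1\}$.

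\emph{Step on $[T,2T]$.} Now $U_{2T}$ lies on the $\FF_T$-measurable chord $[L_s,R_s]$ with segment coordinate $I$. Since $H_1$ is independent of $\FF_T$, $\PP(I=1\mid\FF_T)=p(x,s)$. Lemma~\ref{lem:segment} again gives uniqueness on $[T,2T]$ and $U_T=(x,s)$, that is, $Y_T=\xi$. So the restriction to $[0,T]$ lies in $\Sol_{\theta,[0,T]}(\xi)$.

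\emph{Bijection.} Conversely, any solution on $[0,T]$ with terminal value $\xi$ extends by these two segment solutions. The pieces match at $T$ and $2T$, and the concatenation is bounded. Restriction and extension are inverse to each other because both extension pieces are unique. This gives \eqref{eq:solbijection}.

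\emph{Continuity estimate.} The difference $\Theta(\xi)-\Theta(\zeta)$ is bounded and vanishes unless the pair $(I,J)$ differs for $\xi$ and $\zeta$. Since $H_1$ and $H_2$ are independent uniforms, independent of $\FF_T$,
\[
\PP(I\neq I')\le \E|p(u)-p(u')|,
\]
and
\[
\PP(J\neq J')\le \PP(I\ne I')+\E|p_L(s)-p_L(s')|+\E|p_R(s)-p_R(s')|.
\]
The Lipschitz bounds for $p$, $p_L$, $p_R$ on $K$, together with the Lipschitz bound for $y\mapsto e^{-\alpha_iy}$ on $[-M,M]$, give \eqref{eq:encodingcontinuous}.

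\emph{Main obstacle.} The step I expect to need the most care is applying Lemma~\ref{lem:segment} with random $\FF_{2T}$- and $\FF_T$-measurable segments. This needs the segments in a fixed compact rectangle, which holds since everything lies in $\Delta$, and lengths bounded below, which holds since $K\subset\operatorname{int}\Delta$. It also needs the check that the conditional law of $J$ and $I$ under independence reproduces exactly $\gamma$ evaluated at the point of height $s$.
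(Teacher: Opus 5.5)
Your proposal is correct and follows the paper's proof essentially step by step. The paper uses the same triangle construction; your constants $2/3$ and $\approx 3$ in place of $1/2$ and $4$ still give $K\subset\operatorname{int}\Delta$ and the bound $M'$. It also uses the same two applications of Lemma~\ref{lem:segment}, with the independence of $H_2$ from $\FF_{2T}$ and of $H_1$ from $\FF_T$ forcing $U_{2T}=(1-I)L_s+IR_s$ and then $U_T=(x,s)$, and the same coupling bound on $\PP(\Theta(\xi)\ne\Theta(\zeta))$. The only differences are cosmetic: you present the restriction direction before the extension, and you check explicitly that $p,p_L,p_R$ are the $\gamma$-functions of the three chords, which the paper only asserts.
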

\begin{proof}
Choose
\[
 a=\tfrac12e^{-\alpha M},\qquad c=\tfrac12e^{-\beta M},
 \qquad L=4e^{\alpha M},\qquad D=4e^{\beta M}.
\]
The rectangle $\prod_{i=1}^2[e^{-\alpha_i M},e^{\alpha_i M}]$ is contained in $\operatorname{int}\Delta$, since for every point $(x,s)$ of that rectangle,
\[
 \frac{x-a}{L}+\frac{s-c}{D}<\frac12.
\]
Compactness gives a positive distance to the boundary. Each coordinate of $\Delta$ is bounded below by the corresponding coordinate of $(a,c)$ and above by $(9/2)e^{\alpha_i M}$, proving \eqref{eq:amplitudegrowth}.

On $[T,2T]$, solve the segment problem with endpoints $L_s,R_s$ and terminal event $\{I=1\}$. The processes in Lemma~\ref{lem:segment} satisfy $P_T=p(x,s)$ and $R_T=\gamma^{-1}(P_T)=(x-a)/(b(s)-a)$. Thus Lemma~\ref{lem:segment} gives initial value $u=(x,s)$ and terminal value
\begin{equation}\label{eq:middle}
 U_{2T}=(1-I)L_s+IR_s.
\end{equation}
On $[2T,3T]$, use the segment $[A,C]$ on $\{I=0\}$ and $[B,C]$ on $\{I=1\}$, with terminal event $\{J=1\}$. By \eqref{eq:pleft}--\eqref{eq:IJ}, its initial value is \eqref{eq:middle} and its terminal value is $\widehat\eta$. The two solutions concatenate to a bounded solution on $[T,3T]$ with $Z\in\BMO$ and initial value $\xi$ in the original coordinates. This extends any solution on $[0,T]$ and defines the forward map in \eqref{eq:solbijection}.

For the reverse map, consider any bounded solution with terminal value $\Theta(\xi)$. The random variable $I$ is $\FF_{2T}$-measurable. On the event $\{I=0\}$ its transformed terminal value belongs to $[A,C]$, and on $\{I=1\}$ it belongs to $[B,C]$. Lemma~\ref{lem:segment} shows that its restriction to $[2T,3T]$ coincides with the constructed solution, and hence gives \eqref{eq:middle}. At time $T$, both endpoints $L_s,R_s$ are known. Applying the same lemma on $[T,2T]$ forces $U_T=u$. Restriction to $[0,T]$ is therefore a solution with terminal value $\xi$. This also proves that the extension and restriction maps are inverse.

To prove \eqref{eq:encodingcontinuous}, use the same $H_1,H_2$ for $\xi$ and $\zeta$. The functions $p,p_L,p_R$ are Lipschitz on the common compact set. Conditional on $\xi$ and $\zeta$, the probability that the two values of $I$ differ is $|p(x,s)-p(x',s')|$. When the two values of $I$ agree, the probability that the corresponding values of $J$ differ is bounded by the corresponding difference of $p_L$ or $p_R$. Thus
\[
 \PP(\Theta(\xi)\ne\Theta(\zeta))
 \le C\E|e^{-\alpha_\cdot\xi_\cdot}-e^{-\alpha_\cdot\zeta_\cdot}|
 \le C\E|\xi-\zeta|.
\]
The three possible terminal values are fixed and bounded, yielding the claim.
\end{proof}

\begin{corollary}[Equivalence of the bounded and three-valued terminal problems]\label{cor:equivalence}
Fix $\theta\in(0,\infty)^2$ and a positive time horizon. The following statements are equivalent:
\begin{enumerate}
\item every bounded terminal datum has a bounded solution;
\item every terminal datum taking at most three deterministic values has a bounded solution.
\end{enumerate}
The same equivalence holds for unique bounded solvability. For each fixed bound $M$, solvability for every terminal datum taking values in the three-point set chosen in Theorem~\ref{thm:encoding} implies solvability of every datum in $X_M$.
\end{corollary}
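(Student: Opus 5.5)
The plan is to derive the corollary from Theorem~\ref{thm:encoding}, applied on a rescaled horizon. The direction (i)$\Rightarrow$(ii) is immediate, since a datum with at most three deterministic values is bounded. This holds for existence and for unique existence alike. The substance lies in (ii)$\Rightarrow$(i) and in the refined statement for a fixed $M$.

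Fix a bounded $\xi$ and choose $M$ with $\xi\in X_M$. Theorem~\ref{thm:encoding} is stated on $[0,T]$ extended to $[0,3T]$, but the positive horizon in the corollary is arbitrary. I would therefore apply the theorem with the horizon $T$ replaced by $T_0/3$, where $T_0$ is the given horizon. The theorem uses only Brownian increments after the original horizon, so this is legitimate.

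The main obstacle is that $\xi$ is $\FF_T$-measurable on the original space, while the theorem needs $\xi$ to be measurable at time one third of the horizon. I would handle this by time change. The process $\widetilde W_s=\sqrt{3}\,W_{s/3}$ is a Brownian motion on $[0,3T]$ generating the time-changed filtration. The system \eqref{eq:bsde} transforms under $\widetilde Y_s=Y_{s/3}$ and $\widetilde Z_s=Z_{s/3}/\sqrt3$, because
\[
 q_\theta(Z)\,dt=q_\theta(\sqrt3\widetilde Z)\,ds/3=q_\theta(\widetilde Z)\,ds .
\]
This holds since $q_\theta$ is homogeneous of degree two. The substitution is therefore a bijection of bounded solutions between horizon $T$ and horizon $3T$, and it preserves measurability.

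Carry $\xi$ to $\widetilde\xi$, which is measurable at time $T$ in the new clock, and form $\Theta(\widetilde\xi)$, which takes values in the three-point set $\{c_0,c_1,c_2\}$. Hypothesis (ii) on the horizon $3T$ supplies a bounded solution for $\Theta(\widetilde\xi)$. The inverse map in \eqref{eq:solbijection}, namely restriction to $[0,T]$, gives a solution for $\widetilde\xi$. Undoing the time change gives a solution for $\xi$.

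For uniqueness, \eqref{eq:solbijection} is a bijection, and the time change is also a bijection. Hence uniqueness for $\Theta(\widetilde\xi)$ transfers to uniqueness for $\xi$.

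The fixed-$M$ statement follows from the same argument. For every $\xi\in X_M$, the datum $\Theta(\widetilde\xi)$ lies in the fixed set of Theorem~\ref{thm:encoding}. So solvability for all data with values in that set suffices.

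I would double-check two points. First, the triangle vertices depend only on $(M,\theta)$ and not on the horizon; this holds by the choice made in the proof of Theorem~\ref{thm:encoding}. Second, the augmented filtrations match under the scaling.
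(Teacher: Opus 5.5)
Your overall strategy is the paper's: the bijection \eqref{eq:solbijection}, the quadratic scaling $q_\theta(\sqrt c\,z)=c\,q_\theta(z)$, and uniqueness carried through the bijection. But the central step fails as written, because your time change runs in the wrong direction. Take $\widetilde W_s=\sqrt3\,W_{s/3}$, $\widetilde Y_s=Y_{s/3}$, $\widetilde Z_s=Z_{s/3}/\sqrt3$; your generator computation is correct. This stretches $[0,T]$ onto $[0,3T]$, so $\sigma(\widetilde W_s:s\le T)=\FF_{T/3}$ and $\FF_T=\sigma(\widetilde W_s:s\le 3T)$. Hence $\widetilde\xi$ is measurable only at time $3T$ of the new clock, not at time $T$, and $\Theta(\widetilde\xi)$ cannot be formed. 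Even granting a solution on $[0,3T]$, restricting it to $[0,T]$ and undoing the time change gives a solution on $[0,T/3]$ of the original clock, not one with terminal value $\xi$ at $T$.

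To make $\xi$ measurable at one third of the fixed horizon $T_0$, you need the compression $c=3$: $\widetilde W_s=3^{-1/2}W_{3s}$, $\widetilde Y_s=Y_{3s}$, $\widetilde Z_s=\sqrt3\,Z_{3s}$ on $[0,T_0/3]$. But then the increments of $\widetilde W$ after $T_0/3$, which define $H_1,H_2$ in \eqref{eq:uniforms}, would require $W$ beyond $T_0$, and the original space does not carry it. You must work on an enlarged or different Brownian space. You then have to justify two things: hypothesis (ii), which concerns the given space, applies there; and the solution obtained there transfers back. The paper handles this by noting that on Brownian natural filtrations, terminal variables and predictable integrands are measurable functionals of the path, so solutions transfer between Brownian probability spaces. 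Your check that ``the augmented filtrations match under the scaling'' does not address this.

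A simpler arrangement, essentially the paper's, leaves $\xi$ untouched. Apply Theorem~\ref{thm:encoding} on the given $[0,T]$ extended to $[0,3T]$. Use the scaling only to move the three-valued problem from horizon $3T$ to the fixed horizon; scaling does not change the three values. Restriction to $[0,T]$ then lands in the original filtration.
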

\begin{proof}
Apply the bijection \eqref{eq:solbijection}. To put the two problems on a common fixed horizon, use quadratic homogeneity: for $c>0$,
\[
 \widetilde W_s=c^{-1/2}W_{cs},\qquad
 \widetilde Y_s=Y_{cs},\qquad \widetilde Z_s=\sqrt c\,Z_{cs},
 \qquad q_\theta(\sqrt c\,z)=c\,q_\theta(z).
\]
This deterministic time change preserves the equation and the class $\Sp^\infty\times\BMO$. On Brownian natural filtrations, terminal variables and predictable processes have measurable representations as functionals of the Brownian path, so the solution transfers between the resulting Brownian probability spaces. On the extended probability space, the filtration up to time $T$ is the original augmented natural filtration. Restriction therefore gives a solution on the original space. The bijection between solution sets preserves uniqueness as well as existence.
\end{proof}

\subsection{Genericity with three fixed terminal values}
Fix a triangle \eqref{eq:triangle} and let $c_0,c_1,c_2$ be the images of its vertices under $u\mapsto(-\alpha^{-1}\log u_1,-\beta^{-1}\log u_2)$. On the original time interval $[0,T]$, set
\[
 \mathcal T_C=\{\xi\in L^2(\FF_T;\R^2):\xi\in\{c_0,c_1,c_2\}\text{ a.s.}\},
 \qquad M_C=\max_{i,j}|c_{j,i}|.
\]
This is a closed, hence complete, subset of $X_{M_C}$. If $d_*$ and $d^*$ are the minimum and maximum distances between distinct values, then
\[
 d_*^2\PP(\xi\ne\zeta)
 \le\norm{\xi-\zeta}_{L^2}^2
 \le(d^*)^2\PP(\xi\ne\zeta).
\]
Thus convergence in $\mathcal T_C$ is equivalent to convergence in probability of the corresponding $\{0,1,2\}$-valued indices.

\begin{theorem}[Generic well-posedness for three-valued terminal data]\label{thm:threegeneric}
The set $\mathcal D_{M_C}^\theta\cap\mathcal T_C$ is dense in $\mathcal T_C$. Consequently,
\[
 \mathcal G_{M_C}^\theta\cap\mathcal T_C
\]
is a dense $G_\delta$ subset of $\mathcal T_C$. For every terminal datum in this set, the conclusions of Theorem~\ref{thm:main} hold.
\end{theorem}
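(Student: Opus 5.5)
The key step is density of $\mathcal D_{M_C}^\theta\cap\mathcal T_C$ in $\mathcal T_C$. Everything else then follows from Lemma~\ref{lem:abstract}, applied on the complete space $\mathcal T_C$ in place of $X_{M_C}$.

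\emph{Deducing the consequence from density.} Restrict $F=\Sol_\theta$ to $\mathcal T_C$. By Lemma~\ref{lem:closed} the graph is closed in $\mathcal T_C\times\mathscr E$, since $\mathcal T_C$ is a closed subset of $X_{M_C}$. Take $D=\mathcal D_{M_C}^\theta\cap\mathcal T_C$. Theorem~\ref{thm:corestability} holds for sequences in $X_{M_C}$, so in particular for sequences in $\mathcal T_C$, and therefore gives hypotheses (i) and (ii) of the lemma. The lemma produces a dense $G_\delta$ set $G\subset\mathcal T_C$. Balls in $\mathcal T_C$ are contained in balls of $X_{M_C}$, so every $\xi\in\mathcal G_{M_C}^\theta\cap\mathcal T_C$ belongs to the relative version of each set $O_j$. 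Hence $\mathcal G_{M_C}^\theta\cap\mathcal T_C$ is a $G_\delta$ in $\mathcal T_C$, being a relative intersection of open sets, and it contains $D$. The sets $O_j$ in \eqref{eq:canonicalG} are open in $X_{M_C}$, so their traces on $\mathcal T_C$ are relatively open and contain the dense set $D$. By Baire's theorem in $\mathcal T_C$ the intersection is dense. The conclusions of Theorem~\ref{thm:main} hold on it because it is a subset of $\mathcal G_{M_C}^\theta$.

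\emph{Density.} Fix $\xi\in\mathcal T_C$ with index $K\in\{0,1,2\}$. I would approximate the indicator events $E_k=\{K=k\}$ by events $E_k^n$ generated by finitely many Brownian increments over a grid on $[0,T]$. I would then build a solution whose transformed process lives in the triangle $\Delta$ with vertices $A,B,C$ (the images of $c_0,c_1,c_2$) and ends at the vertex prescribed by the approximate index. The construction mirrors Theorem~\ref{thm:encoding}, compressed into $[0,T]$. Split $[0,T]$ into $[0,T/3]$, $[T/3,2T/3]$, $[2T/3,T]$. Instead of uniform variables independent of the data, use the data's own randomness. Concretely, I would approximate $\xi$ by $\xi^n$ whose event $\{K^n=2\}$ is decided by a measurable event on the last window and $\{K^n=1\}$ versus $\{K^n=0\}$ by the middle window.

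A simpler route, which I would try first, uses two-level segments in the spirit of Lemma~\ref{lem:segment}. On the final interval, condition on $\FF_{2T/3}$ and use the segment $[A,C]$ or $[B,C]$ according to an $\FF_{2T/3}$-measurable indicator $I$; the terminal event is $\{J=1\}$. Proposition~\ref{prop:binary} and Lemma~\ref{lem:segment} solve this with random $\FF_{2T/3}$-measurable endpoints. Before $2T/3$, the value at $2T/3$ lies on the edge $[A,C]$ or $[B,C]$, not necessarily at $L_s,R_s$. So instead I would take the random segment joining the two candidate points $U^{(0)}_{2T/3}\in[A,C]$ and $U^{(1)}_{2T/3}\in[B,C]$ only when these are $\FF_{T/3}$-measurable. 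This suggests choosing data of the form $\xi^n=f(I,J)$ with $I\in\FF_{2T/3}$ and $J\in\FF_T$, and with $\PP(J=1\mid\FF_{2T/3})$ depending only on $\FF_{T/3}$-information. Such data are piecewise segment solutions with three intervals, and all segments lie in $\Delta$ with lengths bounded below because the edges of $\Delta$ are fixed.

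\emph{Main obstacle.} The hard part is showing that terminal data of this structured form are dense in $\mathcal T_C$ under convergence in probability. My plan is to use the fact that an arbitrary pair of events $(E_2,E_1)$ can be approximated by events depending on Brownian increments over many fine subintervals. I would then iterate the three-interval construction along a fine deterministic grid $0=t_0<\dots<t_N=T$. At each grid step the state is a vertex or a point on an edge. Within one step, a segment move either keeps the state at a vertex (constant piece) or moves it along an edge toward a vertex. The idea is to simulate the finite decision tree determining $K^n$ from the increments, using vertices as absorbing states and edges for binary choices. If this iterated construction reaches only approximations in probability rather than exact conditional laws, I would fall back on the density part of Lemma~\ref{lem:forward} restricted to the triangle. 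That lemma was stated for compact convex $K$ precisely for this purpose, so I expect the intended argument forward-approximates a solution in $\operatorname{int}\Delta$ and then pushes it to the vertices using the segment steps of Theorem~\ref{thm:encoding}, rescaled into a short final interval, with the error controlled by \eqref{eq:encodingcontinuous}.
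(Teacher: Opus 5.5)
Your deduction of the $G_\delta$ and density assertions from density of $\mathcal D_{M_C}^\theta\cap\mathcal T_C$ is correct, and it is the paper's argument. The traces on $\mathcal T_C$ of the open sets in \eqref{eq:canonicalG} (with $M=M_C$) are relatively open and contain $\mathcal D_{M_C}^\theta\cap\mathcal T_C$, and Baire's theorem applies in the closed set $\mathcal T_C$. The gap is the density statement itself, which is the substance of the theorem and which you leave open.

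Your first two plans fail as stated.
\begin{itemize}
\item For data $f(I,J)$ you give no solution on $[0,T/3]$. There $U_{T/3}$ is a general $\FF_{T/3}$-measurable point of $\Delta$, so nothing confines $U$ to a segment, and solvability there is the original problem.
\item The grid scheme fails for a structural reason. The barycentric coordinates of $U_t=\E^Q[\widehat\eta\mid\FF_t]$ are the $Q$-conditional probabilities of the three vertices. Once $U_t$ lies on an edge, the opposite vertex has conditional probability zero from then on. If the state at $t_0=0$ is on an edge, the datum takes only two values, and such data are not dense in $\mathcal T_C$.
\end{itemize}

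Your fallback has the right shape, but it lacks the two ingredients that make it work.

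First, a final interval for the push exists only after you replace the index $J$ by an earlier one. Take $t_n\uparrow T$ and an $\FF_{t_n}$-measurable $J_n$ with $\PP(J_n\ne J)\to0$; this exists because $\FF_{T-}=\FF_T$. Then move the vertex inward: $u_n=(1-\epsilon_n)v_{J_n}+\epsilon_nv_*$ with $v_*\in\operatorname{int}\Delta$. This gives a datum on $[0,t_n]$ with values in a compact triangle $\Delta_n\subset\operatorname{int}\Delta$. It can be approximated by cylinder data solvable by Proposition~\ref{prop:cylinders}, since convexity keeps conditioning and mollification inside $\Delta_n$. Lemma~\ref{lem:forward} with $K=\Delta$ then gives piecewise segment solutions on $[0,t_n]$ whose transformed terminal values $\widehat u_n\in\Delta$ are close to $u_n$.

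Second, \eqref{eq:encodingcontinuous} cannot control the error of the two-stage push on $[t_n,T]$. That estimate only compares two encoded data with each other and says nothing about the distance to the target $c_J$. It also holds only on compact subsets of $\operatorname{int}\Delta$, whereas $\widehat u_n$ may lie on $\partial\Delta$ and $p$ has no continuous extension to $C$. The needed fact is vertex concentration. Extend the construction to $\partial\Delta$ by allowing probabilities $0$ and $1$, keeping the first stage constant at $C$. Then the probability $\pi_j(u)$ of ending at $v_j$ tends to $1$ as $u\to v_j$; at $C$ this holds because $1-\pi_2\le\max(1-p_L,1-p_R)$. Independence of the increments after $t_n$ yields $\PP(\widehat J_n\ne J_n)=\E[1-\pi_{J_n}(\widehat u_n)]\to0$. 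Hence $c_{\widehat J_n}\to c_J$, which is the missing density.
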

\begin{proof}
Write an arbitrary datum as $\xi=c_J$, where $J\in\{0,1,2\}$. Choose $t_n\uparrow T$ and define an $\FF_{t_n}$-measurable random variable $J_n$ maximizing $\PP(J=j\mid\FF_{t_n})$, choosing the smallest index in case of a tie. Since $\FF_{T-}=\FF_T$, these conditional probabilities converge to $\ind_{\{J=j\}}$, and $\PP(J_n\ne J)\to0$.

Let $v_0=A,v_1=B,v_2=C$, fix $v_*\in\operatorname{int}\Delta$, and take $\epsilon_n\downarrow0$. The vectors
\[
 u_n=(1-\epsilon_n)v_{J_n}+\epsilon_n v_*
\]
belong to a compact triangle $\Delta_n\subset\operatorname{int}\Delta$. On $[0,t_n]$, condition $u_n$ on finitely many Brownian observations and convolve the resulting functions with nonnegative mollifiers. Convexity preserves membership in $\Delta_n$. Applying $u\mapsto(-\alpha^{-1}\log u_1,-\beta^{-1}\log u_2)$ gives bounded smooth cylinder terminal data, and Proposition~\ref{prop:cylinders} provides bounded solutions. Their exponential transforms stay in $\Delta_n$ by \eqref{eq:conditionalU}.

Lemma~\ref{lem:forward}, with $K=\Delta$, then gives piecewise segment solutions on $[0,t_n]$ whose transformed terminal values $\widehat u_n$ satisfy
\begin{equation}\label{eq:threeprefix}
 \widehat u_n\in\Delta,\qquad
 \E|\widehat u_n-u_n|^2<1/n.
\end{equation}
Split $[t_n,T]$ into two intervals, and apply the two-stage construction \eqref{eq:pfirst}--\eqref{eq:encoding} from the initial value $\widehat u_n$, using the independent Brownian increments on those intervals. The construction extends to the boundary of $\Delta$ by allowing event probabilities zero and one; at the top vertex $C$, keep the first interval constant. Each line can be intersected with a larger rectangle contained in $(0,\infty)^2$, so that the resulting segments satisfy Definition~\ref{def:core}.

Let $\pi_j(u)$ be the conditional probability that the construction from $u$ ends at $v_j$. Away from $C$ the formulas are
\[
 \pi_0=(1-p)(1-p_L),\qquad
 \pi_1=p(1-p_R),\qquad
 \pi_2=(1-p)p_L+pp_R.
\]
These probabilities satisfy
\begin{equation}\label{eq:vertexconcentration}
 u\to v_j\quad\Longrightarrow\quad\pi_j(u)\to1.
\end{equation}
At $A$ and $B$ this follows directly from \eqref{eq:pfirst}--\eqref{eq:pright}. At $C$, use $1-\pi_2\le\max(1-p_L,1-p_R)\to0$.

Let $\widehat J_n$ denote the index of the resulting terminal vertex. Independence of the Brownian increments after $t_n$ from $\FF_{t_n}$ gives
\[
 \PP(\widehat J_n\ne J_n)
 =\E[1-\pi_{J_n}(\widehat u_n)]\longrightarrow0
\]
by \eqref{eq:threeprefix}, $\epsilon_n\to0$ and \eqref{eq:vertexconcentration}. Thus $c_{\widehat J_n}\to c_J$ in $L^2$. Each $c_{\widehat J_n}$ belongs to $\mathcal D_{M_C}^\theta\cap\mathcal T_C$. This proves density.

Each open set in \eqref{eq:canonicalG}, with $M=M_C$, contains the dense subset $\mathcal D_{M_C}^\theta\cap\mathcal T_C$ of $\mathcal T_C$. Its intersection with $\mathcal T_C$ is therefore relatively open and dense. Baire's theorem on $\mathcal T_C$ proves the final assertion.
\end{proof}

\begin{corollary}[Generic well-posedness under the reduction]\label{cor:pullback}
Use the map $\Theta$ of Theorem~\ref{thm:encoding} and the bound $M'$ in \eqref{eq:amplitudegrowth}. Then
\[
 \Theta^{-1}(\mathcal G_{M',[0,3T]}^\theta)
\]
is a dense $G_\delta$ subset of $X_M$. At each of its points, the original terminal problem has a unique bounded solution and the stability property \eqref{eq:mainstability}.
\end{corollary}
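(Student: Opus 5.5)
The plan is to pull back the residual set through the continuous map $\Theta$ and to show density directly by using the segment structure of $\Theta$. By \eqref{eq:encodingcontinuous}, $\Theta:X_M\to X_{M'}$ (on the horizon $[0,3T]$) is continuous in the $L^2$ metrics, because $\E|\xi-\zeta|\le \norm{\xi-\zeta}_{L^2}$. It lands in $X_{M'}$ by \eqref{eq:amplitudegrowth}. Since $\mathcal G_{M',[0,3T]}^\theta$ is a countable intersection of open sets $O_j$ as in \eqref{eq:canonicalG}, the preimage $\Theta^{-1}(\mathcal G)=\bigcap_j\Theta^{-1}(O_j)$ is a $G_\delta$ subset of $X_M$.

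For density, I will show that each $\Theta^{-1}(O_j)$ is dense and then apply Baire's theorem in the complete space $X_M$. It suffices to prove that $\Theta$ maps the dense set $\mathcal D_M^\theta$ of Proposition~\ref{prop:coredense} into $\mathcal D_{M',[0,3T]}^\theta$, which lies in every $O_j$ by Lemma~\ref{lem:abstract}. Take $\bar\xi\in\mathcal D_M^\theta$ with its piecewise segment solution on $[0,T]$. Its extension under the bijection \eqref{eq:solbijection} adds two further segment pieces: the horizontal chord $[L_s,R_s]$ on $[T,2T]$ and one of $[A,C],[B,C]$ on $[2T,3T]$. These segments are $\FF_T$- and $\FF_{2T}$-measurable and lie in the compact triangle $\Delta\subset(0,\infty)^2$. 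Their lengths have a deterministic positive lower bound, because $U_T$ lies in the rectangle $\prod_i[e^{-\alpha_iM},e^{\alpha_iM}]$, which has positive distance from the vertical side of $\Delta$. That distance keeps the horizontal chord long, and the sides $[A,C],[B,C]$ are fixed. Adjoining the stopping times $T,2T$ to the partition and enlarging the rectangle to contain $\Delta$ then shows that the extension is a piecewise segment solution on $[0,3T]$. Hence $\Theta(\bar\xi)\in\mathcal D_{M'}^\theta$, and each $\Theta^{-1}(O_j)$ is open and contains $\mathcal D_M^\theta$, so it is dense.

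At a point $\xi$ with $\Theta(\xi)\in\mathcal G$, the bijection \eqref{eq:solbijection} gives existence and uniqueness for $\xi$ on $[0,T]$. For stability, let $\xi_n\to\xi$ in probability in $X_M$ and take $(Y^n,Z^n)\in\Sol_\theta(\xi_n)$. Convergence in probability is the same as $L^2$ convergence in $X_M$, so $\Theta(\xi_n)\to\Theta(\xi)$ by \eqref{eq:encodingcontinuous}. The extended solutions lie in $\Sol_{\theta,[0,3T]}(\Theta(\xi_n))$. Theorem~\ref{thm:main}(ii) on $[0,3T]$ gives convergence in $\Sp^p\times\Hp^p$ on $[0,3T]$, and restricting to $[0,T]$ yields \eqref{eq:mainstability}.

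The main obstacle is the bookkeeping showing that the extension is a genuine piecewise segment solution in the sense of Definition~\ref{def:core}: a deterministic number of pieces, a common deterministic rectangle, and deterministic lower bounds on segment lengths. The uniform positivity of the horizontal chord length needs care, and it is supplied by the inequality $(x-a)/L+(s-c)/D<1/2$ from the proof of Theorem~\ref{thm:encoding}. A further point to check is that the solution space on the extended Brownian filtration restricted to $[0,T]$ agrees with the original one; this was already noted in Corollary~\ref{cor:equivalence}.
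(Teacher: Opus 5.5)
Your proposal is correct and follows the paper's proof essentially step for step. The shared steps are continuity of $\Theta$ from \eqref{eq:encodingcontinuous}; the inclusion $\Theta(\mathcal D_M^\theta)\subset\mathcal D_{M',[0,3T]}^\theta$, obtained by adjoining the two segment pieces, which makes each $\Theta^{-1}(O_j)$ open and dense; transfer of existence and uniqueness through \eqref{eq:solbijection}; and stability by extending to $[0,3T]$, applying Theorem~\ref{thm:main} there, and restricting to $[0,T]$. Your chord-length bookkeeping (for instance $b(s)-a\ge x-a\ge\tfrac12e^{-\alpha M}$) fills in a detail the paper leaves implicit.
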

\begin{proof}
The map $\Theta$ is continuous by \eqref{eq:encodingcontinuous}. It maps $\mathcal D_M^\theta$ into $\mathcal D_{M',[0,3T]}^\theta$, since adjoining the two segment solutions gives a piecewise segment solution on $[0,3T]$. The inverse image of each open set defining $\mathcal G_{M',[0,3T]}^\theta$ is therefore open and contains $\mathcal D_M^\theta$, so it is dense. This proves the residual assertion. The bijection \eqref{eq:solbijection} transfers existence and uniqueness. For stability, extend each approximating solution to $[0,3T]$, apply Theorem~\ref{thm:main} at $\Theta(\xi)$, and restrict to $[0,T]$.
\end{proof}

\section{Parameter dependence and further consequences}\label{sec:parameters}

Let $\mathsf P=(0,\infty)^2$. Equip it with the complete metric
\[
 d_{\mathsf P}(\theta,\theta')
 =|\log\alpha-\log\alpha'|+|\log\beta-\log\beta'|,
\]
which induces its usual topology. We use the product metric on $\mathsf P\times X_M$.

\subsection{Joint stability at piecewise segment solutions}
\begin{proposition}\label{prop:jointcore}
Fix $\theta_0\in\mathsf P$ and a piecewise segment solution $(\bar Y,\bar Z)$ for $\theta_0$, with terminal value $\bar\xi\in X_M$. Suppose $\theta_n\to\theta_0$, $\xi_n\to\bar\xi$ in $X_M$, and
\[
 (Y^n,Z^n)\in\Sol_{\theta_n}(\xi_n).
\]
Then $(Y^n,Z^n)\to(\bar Y,\bar Z)$ in $\Sp^p\times\Hp^p$ for every $1\le p<\infty$.
\end{proposition}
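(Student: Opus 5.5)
The plan is to repeat the proof of Theorem~\ref{thm:corestability} almost verbatim. The only change is that the approximating solutions now solve the equation with parameter $\theta_n$ rather than $\theta_0$, while the reference solution stays fixed at $\theta_0$. The key point is that the reference exponential transform is $\bar U_i=e^{-\alpha_{0,i}\bar Y_i}$. I will therefore transform the approximating solutions with the \emph{reference} exponents as well.

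The first step is uniform estimates. Since $\theta_n$ lies in a compact set $\mathsf K\subset\mathsf P$ for large $n$, Proposition~\ref{prop:bounds} and Lemma~\ref{lem:density} give $|Y^n_i|\le M$, uniform BMO bounds on $Z^n$, and uniform bounds \eqref{eq:densitymoments} for $D_n=\cE(b_n\cdot W)_T$ with $b_n=Z^n_1+Z^n_2$. Define $U^n_i=e^{-\alpha_{0,i}Y^n_i}$ and $V^n_i=-\alpha_{0,i}U^n_iZ^n_i$. Itô's formula under $\theta_n$ gives
\[
 b_n+a_{\theta_0}(U^n)\cdot V^n=\kappa_n,\qquad \kappa_{n,i}=\tfrac12(\alpha_{0,i}-\alpha_{n,i})Z^n_i .
\]
More precisely, $\dd U^n_i=U^n_i(\ldots)$ picks up an extra drift $\tfrac12\alpha_{0,i}(\alpha_{n,i}-\alpha_{0,i})U^n_i(Z^n_i)^2$, which after factoring has the form $V^n_i\,\kappa'_{n,i}$ with $\kappa'_n$ linear in $Z^n$ and coefficients $O(|\theta_n-\theta_0|)$. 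Thus under $Q_n$ one has $\dd U^n=V^n\dd W^{Q_n}+r_n\dt$, where $\|\int|r_n|\dt\|_{L^p}\le C|\theta_n-\theta_0|$ uniformly, by the $\Hp^{2p}$ bounds on $Z^n$ together with \eqref{eq:PtoQ}.

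Next, on each reference interval $[\sigma,\tau]$, and assuming $U^n_\tau\to\bar U_\tau$ in probability, I define the auxiliary $Q_n$-martingale $\widehat U^n_t=\E^{Q_n}[\bar U_\tau\mid\FF_t]$ as in \eqref{eq:replacement}. The difference $U^n-\widehat U^n$ is the $Q_n$-conditional expectation of the terminal difference plus the drift integral $\int r_n$. Hence the maximal and BDG inequalities under $Q_n$, followed by \eqref{eq:QtoP}, again give \eqref{eq:replacementconvergence}; the extra drift contributes $O(|\theta_n-\theta_0|)\to0$. The error $\varepsilon_n=b_n+a_{\theta_0}(\widehat U^n)\cdot\widehat V^n$ now equals the earlier two terms plus $\kappa_n$, and it still tends to zero in every $\Hp^p$. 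From here the scalar reduction \eqref{eq:scalarerror} with $\gamma=\gamma_{A,e}$ built from $G_{\theta_0}$ is unchanged, because $\widehat U^n$ lies on the $\theta_0$-reference segment. The forcing estimate \eqref{eq:forcing0}, Doob's inequality, and the Itô computation for $(P_n-\bar P)^2$ go through as before.

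Backward induction over the finitely many reference intervals then gives $U^n\to\bar U$ in $\Sp^p$ and $V^n\to\bar V$ in $\Hp^p$. The final interval uses $e^{-\alpha_{0,i}\xi_{n,i}}\to e^{-\alpha_{0,i}\bar\xi_i}$. Inverting with the fixed exponents $\alpha_{0,i}$ gives convergence of $(Y^n,Z^n)$, and uniform integrability upgrades this to all $p$. The main obstacle is the bookkeeping of the parameter-mismatch drift: it must be checked that it is controlled by $|\theta_n-\theta_0|$ times quantities bounded uniformly in $\Hp^{q}$ for all $q$, which is exactly what the uniform-in-compact-$\mathsf K$ clauses of Proposition~\ref{prop:bounds} and Lemma~\ref{lem:density} supply.
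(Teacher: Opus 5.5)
Your argument is correct, but it handles the parameter mismatch differently from the paper.

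\textbf{How the two routes differ.} The paper transforms each approximating solution with its own parameter, $U_i^n=e^{-\alpha_{i,n}Y_i^n}$. Then $U^n$ is an exact $Q_n$-martingale, and \eqref{eq:replacementconvergence} carries over verbatim once $\theta_n\to\theta_0$ is used to turn $Y_\tau^n\to\bar Y_\tau$ into $U_\tau^n\to\bar U_\tau$. The mismatch then appears only in the drift error, as the extra term $[a_{\theta_0}(U^n)-a_{\theta_n}(U^n)]\cdot V^n$, bounded by $C|\theta_n-\theta_0|\norm{V^n}_{\Hp^p}$.

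You instead use the fixed exponents $\alpha_{0,i}$. This puts $U^n$, $\widehat U^n$ and $\bar U$ in the same coordinates, so the terminal step and the back-transformation need no parameter convergence. The price is that $U^n$ is no longer a $Q_n$-martingale. You correctly split $U^n-\widehat U^n$ into a $Q_n$-martingale plus $\int r_n\dt$, and control the latter by $C|\theta_n-\theta_0|\norm{Z^n}_{\Hp^{2p}}^2$ together with \eqref{eq:PtoQ}, before applying Doob and BDG. Both routes are equally short.

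\textbf{A bookkeeping error, which is harmless.} In your coordinates the identity $b_n+a_{\theta_0}(U^n)\cdot V^n=0$ holds exactly. It is purely algebraic: $a_{\theta_0}(U^n)\cdot V^n=-\sum_i Z_i^n$ whenever the same exponents are used in $U^n$, $V^n$ and $a$. Your $\kappa_n$ is therefore spurious; as written it is also a vector set equal to a scalar.

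The mismatch lives only in the $Q_n$-drift, which is
\[
 r_{n,i}=\tfrac12\alpha_{0,i}(\alpha_{0,i}-\alpha_{n,i})U_i^n(Z_i^n)^2 .
\]
You have the sign reversed. Since you use $\varepsilon_n$ only through an upper bound, and your extra term tends to zero anyway, the conclusion is unaffected. In fact your $\varepsilon_n$ is exactly the one from Theorem~\ref{thm:corestability}, so in your version the scalar reduction is literally unchanged.
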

\begin{proof}
For all sufficiently large $n$, the parameters $\theta_n$ lie in a common compact subset of $\mathsf P$. Proposition~\ref{prop:bounds} bounds $\norm{Y^n}_{\Sp^\infty}$ and $\norm{Z^n}_{\Hp^p}$ uniformly in $n$ for every $1\le p<\infty$. Lemma~\ref{lem:density} gives uniform bounds for the corresponding densities as in \eqref{eq:densitymoments}. For each $n$, use the exponential transform with parameter $\theta_n$:
\[
 U_i^n=e^{-\alpha_{i,n}Y_i^n},\qquad
 V_i^n=-\alpha_{i,n}U_i^nZ_i^n,
 \qquad b_n=Z_1^n+Z_2^n.
\]
These processes satisfy
\[
 b_n+a_{\theta_n}(U^n)\cdot V^n=0,
\]
and $U^n$ is a bounded martingale under $Q_n$ defined by $\cE(b_n\cdot W)_T$. The processes $U^n$ lie in a common compact rectangle in $(0,\infty)^2$.

On an interval $[\sigma,\tau]$ of the partition for $(\bar Y,\bar Z)$, define
\[
 \widehat U_t^n=\E^{Q_n}[\bar U_\tau\mid\FF_t],
 \qquad \bar U_i=e^{-\alpha_{i,0}\bar Y_i}.
\]
Assuming $Y_\tau^n\to\bar Y_\tau$ in probability, the parameter convergence gives $U_\tau^n\to\bar U_\tau$. Estimate \eqref{eq:replacementconvergence} therefore holds. Use the segment containing $\bar U$ on this interval and the function $\gamma$ defined with parameter $\theta_0$. The drift error is now
\begin{align*}
 \varepsilon_n
 &=b_n+a_{\theta_0}(\widehat U^n)\cdot\widehat V^n\\
 &=a_{\theta_0}(\widehat U^n)\cdot\widehat V^n
    -a_{\theta_0}(U^n)\cdot V^n
   +[a_{\theta_0}(U^n)-a_{\theta_n}(U^n)]\cdot V^n.
\end{align*}
The first difference tends to zero in $\Hp^p$ for every $1\le p<\infty$ by the proof of Theorem~\ref{thm:corestability}. Smooth dependence of $a_\theta$ on $\theta$, on the common compact sets, bounds the last term by
\[
 C|\theta_n-\theta_0|\norm{V^n}_{\Hp^p}\longrightarrow0.
\]
The same scalar equation \eqref{eq:scalarerror}, estimate \eqref{eq:forcing0}, and backward induction apply. Returning to $(Y^n,Z^n)$ by \eqref{eq:coordinates} and using $\theta_n\to\theta_0$ yields the assertion.
\end{proof}

\subsection{Generic well-posedness with parameters}
\begin{theorem}\label{thm:joint}
For every $M>0$, there is a dense $G_\delta$ set $\mathscr G_M\subset\mathsf P\times X_M$ with the following properties.
\begin{enumerate}
\item At each $(\theta,\xi)\in\mathscr G_M$, equation \eqref{eq:bsde} has a unique bounded solution. If $(\theta_n,\xi_n)\to(\theta,\xi)$ and $(Y^n,Z^n)\in\Sol_{\theta_n}(\xi_n)$, then these solutions converge to it in $\Sp^p\times\Hp^p$ for every $1\le p<\infty$.
\item For every fixed $\theta\in\mathsf P$, the section
\[
 \mathscr G_M^\theta=\{\xi\in X_M:(\theta,\xi)\in\mathscr G_M\}
\]
is a dense $G_\delta$ subset of $X_M$ and contains $\mathcal D_M^\theta$.
\item There is a dense $G_\delta$ set $H_M\subset X_M$ such that for every $\xi\in H_M$, the parameter section
\[
 \mathscr G_{M,\xi}=\{\theta\in\mathsf P:(\theta,\xi)\in\mathscr G_M\}
\]
is a dense $G_\delta$ subset of $\mathsf P$.
\end{enumerate}
\end{theorem}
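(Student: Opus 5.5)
Following the proof of Theorem~\ref{thm:main}, I would apply Lemma~\ref{lem:abstract} on the product space. Take $X=\mathsf P\times X_M$ with the product metric, which is complete and separable. Take $E=\mathscr E$ and $F(\theta,\xi)=\Sol_\theta(\xi)$. For the dense set use $D=\{(\theta,\xi):\theta\in\mathsf P,\ \xi\in\mathcal D_M^\theta\}$.

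\emph{Hypotheses of the lemma.} Density of $D$ follows from Proposition~\ref{prop:coredense} applied for each fixed $\theta$. Any ball around $(\theta,\xi)$ contains $\{\theta\}\times B_{X_M}(\xi,r)$, and this set meets $\{\theta\}\times\mathcal D_M^\theta$. Hypotheses (i) and (ii) are Theorem~\ref{thm:corestability} and Proposition~\ref{prop:jointcore}, respectively. Closedness of the graph needs a parametric version of Lemma~\ref{lem:closed}. When $\theta_n\to\theta$, the coefficients of $q_{\theta_n}$ converge to those of $q_\theta$, and $\norm{Z^n}_{\Hp^2}$ is bounded. Hence $\E\int|q_{\theta_n}(Z^n)-q_\theta(Z)|\dt\to0$, and the rest of that proof goes through unchanged. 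The lemma then gives the dense $G_\delta$ set $\mathscr G_M=\bigcap_jO_j$. Here $O_j$ consists of the points having a product ball on which the union of solution sets has $\mathscr E$-diameter less than $1/j$. Part (i) follows in $\mathscr E$. Upgrading to $\Sp^p\times\Hp^p$ for $1\le p<\infty$ uses the uniform bounds of Proposition~\ref{prop:bounds} along a sequence $\theta_n$ in a compact set, together with uniform integrability, exactly as in the proof of Theorem~\ref{thm:main}.

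\emph{Part (ii).} Fix $\theta$. The section of each open set $O_j$ is open in $X_M$, since $\xi\mapsto(\theta,\xi)$ is continuous. Each section contains $\mathcal D_M^\theta$, because $D\subset O_j$ and $\{\theta\}\times\mathcal D_M^\theta\subset D$. Hence each section is dense in $X_M$, and the section of $\mathscr G_M$ is a countable intersection of dense open sets. The point to stress is that the open sets in the lemma contain all of $D$, not merely a dense subset of it. That is what makes every section dense, rather than only generic sections.

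\emph{Part (iii).} I expect this to be the main obstacle, because $D$ is not known to have dense $\theta$-sections. I would invoke the Kuratowski--Ulam theorem. $\mathsf P$ and $X_M$ are separable complete metric spaces, so $\mathscr G_M$ is comeager in the product. Its sections $\mathscr G_{M,\xi}$ are therefore comeager in $\mathsf P$ for all $\xi$ outside a meager set. To get a $G_\delta$ set $H_M$, work with each $O_j$ separately. The set $\{\xi:(O_j)_\xi\text{ dense in }\mathsf P\}$ equals
\[
 \bigcap_k\{\xi:(O_j)_\xi\cap B_k\ne\emptyset\}
\]
over a countable base $(B_k)$ of $\mathsf P$. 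Each set in this intersection is open, as a projection of the open set $O_j\cap(B_k\times X_M)$. It is comeager by Kuratowski--Ulam, and hence dense. Intersecting over $j$ gives the dense $G_\delta$ set $H_M$. For $\xi\in H_M$, every section $(O_j)_\xi$ is open and dense, so $\mathscr G_{M,\xi}=\bigcap_j(O_j)_\xi$ is a dense $G_\delta$ subset of $\mathsf P$.
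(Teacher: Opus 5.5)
Your proposal is correct and follows the paper's proof essentially step by step. It uses the same application of Lemma~\ref{lem:abstract} on $\mathsf P\times X_M$ with $D=\{(\theta,\xi):\xi\in\mathcal D_M^\theta\}$, the same parametric closed-graph estimate, the same section argument for (ii), and the same countable-base construction of $H_M$ for (iii). The one difference is in (iii): you invoke Kuratowski--Ulam to show that the projection of $O_j\cap(B_k\times X_M)$ is dense, whereas the paper simply notes that $O_j$ is dense and open in the product, so it meets $B_k\times U$ for every nonempty open $U\subset X_M$; the step you flagged as the main obstacle is therefore elementary.
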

\begin{proof}
The graph of $(\theta,\xi)\mapsto\Sol_\theta(\xi)$ is closed in $\mathsf P\times X_M\times\mathscr E$. Indeed, if $\theta_n\to\theta$ and $Z^n\to Z$ in $\Hp^2$, then
\begin{align*}
 \E\int_0^T|q_{\theta_n}(Z^n)-q_\theta(Z)|\dt
 &\le C(\norm{Z^n}_{\Hp^2}+\norm{Z}_{\Hp^2})\norm{Z^n-Z}_{\Hp^2}\\
 &\quad+C|\theta_n-\theta|\norm{Z^n}_{\Hp^2}^2\longrightarrow0.
\end{align*}
The rest follows as in Lemma~\ref{lem:closed}.

Apply Lemma~\ref{lem:abstract} on $\mathsf P\times X_M$ to $F(\theta,\xi)=\Sol_\theta(\xi)$ and the dense set of all pairs $(\theta,\xi)$ with $\xi\in\mathcal D_M^\theta$. Proposition~\ref{prop:jointcore} verifies the stability assumption on this set. Let $\mathscr O_j$ be the open sets defined by \eqref{eq:abstractopen} for this product space and set
\[
 \mathscr G_M=\bigcap_{j\ge1}\mathscr O_j.
\]
This proves (i), with convergence in $\Sp^p\times\Hp^p$ following from the locally uniform moment bounds. For each fixed $\theta$, every section $\mathscr O_j^\theta$ is open and contains $\mathcal D_M^\theta$. Thus it is dense, proving (ii).

For (iii), take a countable base $(B_\ell)_{\ell\ge1}$ of nonempty open sets in $\mathsf P$. For every $j,\ell$, the projection onto $X_M$ of
\[
 \mathscr O_j\cap(B_\ell\times X_M)
\]
is open and dense: openness follows from the product topology, and density follows by intersecting $\mathscr O_j$ with $B_\ell\times U$ for an arbitrary nonempty open $U\subset X_M$. Let $H_M$ be the intersection of these projections. If $\xi\in H_M$, the open section $\mathscr O_{j,\xi}$ meets every $B_\ell$ and is therefore dense. Its countable intersection over $j$ is the required dense $G_\delta$ parameter section.
\end{proof}

\begin{corollary}[Countable parameter families]\label{cor:countable}
For every countable set $\mathsf P_0\subset\mathsf P$, there is a dense $G_\delta$ subset of $X_M$ on which the terminal problem is uniquely solvable for every $\theta\in\mathsf P_0$, with joint stability at each such $(\theta,\xi)$.
\end{corollary}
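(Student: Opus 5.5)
The plan is to intersect the sections of the dense $G_\delta$ set $\mathscr G_M$ from Theorem~\ref{thm:joint} over the countable family $\mathsf P_0$. For each $\theta\in\mathsf P_0$, part (ii) of that theorem says $\mathscr G_M^\theta=\{\xi:(\theta,\xi)\in\mathscr G_M\}$ is a dense $G_\delta$ subset of $X_M$. Since $\mathscr G_M=\bigcap_j\mathscr O_j$ with $\mathscr O_j$ open in $\mathsf P\times X_M$, each section equals $\bigcap_j\mathscr O_j^\theta$, and each $\mathscr O_j^\theta$ is open in $X_M$ because the inclusion $\xi\mapsto(\theta,\xi)$ is continuous. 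Density of each $\mathscr O_j^\theta$ follows from $\mathcal D_M^\theta\subset\mathscr O_j^\theta$ together with Proposition~\ref{prop:coredense}.

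Set
\[
 \mathcal G_{M}^{\mathsf P_0}=\bigcap_{\theta\in\mathsf P_0}\mathscr G_M^\theta=\bigcap_{\theta\in\mathsf P_0}\bigcap_{j\ge1}\mathscr O_j^\theta .
\]
This is a countable intersection of open dense subsets of the complete metric space $X_M$ (completeness was noted after \eqref{eq:terminalspace}). By Baire's theorem it is a dense $G_\delta$ set.

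For $\xi$ in this set and $\theta\in\mathsf P_0$, the pair $(\theta,\xi)$ lies in $\mathscr G_M$. Part (i) of Theorem~\ref{thm:joint} then gives a unique bounded solution. It also gives joint stability: whenever $(\theta_n,\xi_n)\to(\theta,\xi)$ in $\mathsf P\times X_M$ and $(Y^n,Z^n)\in\Sol_{\theta_n}(\xi_n)$, these solutions converge to the solution at $(\theta,\xi)$ in $\Sp^p\times\Hp^p$ for every $1\le p<\infty$. The parameters $\theta_n$ here need not lie in $\mathsf P_0$.

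No step is a genuine obstacle. The only point needing care is confirming that sections of the open sets $\mathscr O_j$ are open and dense, which the proof of Theorem~\ref{thm:joint}(ii) already provides. If one wanted the set to also contain $\bigcup_{\theta\in\mathsf P_0}\mathcal D_M^\theta$, this would fail in general, and the statement does not claim it.
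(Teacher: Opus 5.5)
Your proposal is correct and is exactly the paper's argument: intersect the sections $\mathscr G_M^\theta$ over $\theta\in\mathsf P_0$, which are dense $G_\delta$ sets by Theorem~\ref{thm:joint}(ii), apply Baire's theorem in $X_M$, and obtain uniqueness and joint stability from Theorem~\ref{thm:joint}(i). Your closing claim that the intersection would in general fail to contain $\bigcup_{\theta\in\mathsf P_0}\mathcal D_M^\theta$ is not established by anything in the paper, since that question is left open. It plays no role in the proof.
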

\begin{proof}
Take $\bigcap_{\theta\in\mathsf P_0}\mathscr G_M^\theta$ and use Theorem~\ref{thm:joint}.
\end{proof}

\subsection{Probability measures on the terminal space}
Equip the space $\mathcal P(X_M)$ of Borel probability measures with the weak topology.

\begin{proposition}\label{prop:sampling}
For fixed $\theta$ and $M$, the set
\[
 \{\nu\in\mathcal P(X_M):\nu(\mathcal G_M^\theta)=1\}
\]
is a dense $G_\delta$ subset of $\mathcal P(X_M)$.
\end{proposition}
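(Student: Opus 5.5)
We have to show that $\mathcal M:=\{\nu\in\mathcal P(X_M):\nu(\mathcal G_M^\theta)=1\}$ is a dense $G_\delta$. Since $X_M$ is a complete separable metric space, $\mathcal P(X_M)$ with the weak topology is Polish (Prokhorov metric), so Baire category applies there.

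\emph{$G_\delta$ property.} By \eqref{eq:canonicalG}, $\mathcal G_M^\theta=\bigcap_{j\ge1}O_j$ with each $O_j$ open in $X_M$, and the $O_j$ can be taken decreasing in $j$. Countable additivity gives
\[
 \mathcal M=\bigcap_{j\ge1}\{\nu:\nu(O_j)=1\}
 =\bigcap_{j\ge1}\bigcap_{k\ge1}\{\nu:\nu(O_j)>1-1/k\}.
\]
By the portmanteau theorem, $\nu\mapsto\nu(O)$ is lower semicontinuous for open $O$. Hence each set $\{\nu:\nu(O_j)>1-1/k\}$ is open, and $\mathcal M$ is a countable intersection of open sets.

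\emph{Density.} Finitely supported measures with rational weights,
\[
 \sum_{m=1}^Nq_m\delta_{x_m},
\]
are dense in $\mathcal P(X_M)$ because $X_M$ is separable. Since $\mathcal G_M^\theta$ is dense in $X_M$ (Theorem~\ref{thm:main}), each atom $x_m$ can be replaced by a point $x_m'\in\mathcal G_M^\theta$ with $d(x_m,x_m')<\epsilon$. For the bounded-Lipschitz metric, which metrizes the weak topology, this changes the measure by at most $\epsilon$. The perturbed measure charges only points of $\mathcal G_M^\theta$, so it lies in $\mathcal M$. Therefore $\mathcal M$ is dense.

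\emph{Main obstacle.} The only delicate point is the $G_\delta$ step. One must use the explicit open sets of \eqref{eq:canonicalG}, rather than an abstract residual set, so that $\{\nu(\cdot)=1\}$ can be written as a countable intersection of open conditions via lower semicontinuity on open sets. Monotonicity of the $O_j$ is not actually needed, since $\nu(\bigcap_j O_j)=1$ if and only if $\nu(O_j)=1$ for every $j$.

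Completeness of $\mathcal P(X_M)$ is not required for the statement itself. It only matters for the remark that $\mathcal M$ is comeager, which follows from the statement once we know $\mathcal P(X_M)$ is Polish.
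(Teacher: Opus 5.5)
Your proof is correct and is essentially the paper's: both write the set as $\bigcap_{j,k}\{\nu:\nu(O_j)>1-1/k\}$, get openness from the portmanteau theorem, and prove density by moving the atoms of finitely supported measures slightly. The only difference is that you move the atoms directly into the dense set $\mathcal G_M^\theta$, so the set itself is dense without the paper's final appeal to Baire's theorem on the Polish space $\mathcal P(X_M)$ (the paper moves atoms into each $O_j$ separately). Incidentally, a dense $G_\delta$ is comeager in any topological space, so completeness is not needed for your closing remark either.
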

\begin{proof}
Write $\mathcal G_M^\theta=\bigcap_jO_j$ as in \eqref{eq:canonicalG}. For $k\ge2$, the set
\[
 V_{j,k}=\{\nu:\nu(O_j)>1-1/k\}
\]
is open by the Portmanteau theorem. It is dense because finitely supported probability measures are dense in $\mathcal P(X_M)$ and each of their atoms can be moved arbitrarily slightly into $O_j$. The displayed set is exactly $\bigcap_{j,k}V_{j,k}$. Since $X_M$ is Polish, so is $\mathcal P(X_M)$, and Baire's theorem completes the proof.
\end{proof}

\makeatletter
\renewcommand{\@biblabel}[1]{#1.}
\makeatother
\providecommand{\bysame}{\leavevmode\hbox to3em{\hrulefill}\thinspace}
\providecommand{\MR}{\relax\ifhmode\unskip\space\fi MR }
\providecommand{\MRhref}[2]{%
  \href{http://www.ams.org/mathscinet-getitem?mr=#1}{#2}
}
\providecommand{\href}[2]{#2}

\end{document}